\documentclass[12pt]{amsart}

\usepackage{algorithmic}
\usepackage[section]{algorithm}

\usepackage{fullpage}
\usepackage{amsfonts,amssymb,amscd,amsmath,enumerate,verbatim,color,xcolor}
\usepackage[latin1]{inputenc}
\usepackage{latexsym}
\usepackage{tikz}
\usepackage{graphicx}
\usepackage{here}
\usepackage{mathptmx}
\usepackage{hyperref}
\hypersetup{colorlinks,linkcolor=blue,citecolor=blue,urlcolor=blue}

\def\NZQ{\mathbb}

\def\ZZ{{\NZQ Z}}
\def\RR{{\NZQ R}}

\def\ab{{\mathbf a}}

\def\cb{{\mathbf c}}

\def\gb{{\mathbf g}}
\def\hb{{\mathbf h}}

\def\vb{{\mathbf v}}
\def\ub{{\mathbf u}}
\def\wb{{\mathbf w}}
\def\xb{{\mathbf x}}
\def\yb{{\mathbf y}}
\def\zb{{\mathbf z}}

\def\opn#1#2{\def#1{\operatorname{#2}}}
\opn\conv{conv}
\opn\Vol{Vol}
\opn\ord{ord}
\opn\supp{supp}
\opn\im{Im}
\opn\Ker{Ker}
\def\red{{\rm red}}

\def\Lc{{\mathcal L}}

\def\Pc{{\mathcal P}}
\def\Qc{{\mathcal Q}}

\def\hei{{\rm ht}}

\newtheorem{Theorem}{Theorem}[section]
\newtheorem{Lemma}[Theorem]{Lemma}
\newtheorem{Corollary}[Theorem]{Corollary}
\newtheorem{Proposition}[Theorem]{Proposition}

\theoremstyle{definition}

\newtheorem{Example}[Theorem]{Example}
\newtheorem{Definition}[Theorem]{Definition}

\numberwithin{equation}{section}

\title{Classification and counting of Gorenstein simplices with $h^*$-polynomial $1+t^k+\cdots+t^{(v-1)k}$}
\author{Akiyoshi Tsuchiya}

\address{Akiyoshi Tsuchiya,
Department of Information Science,
Faculty of Science,
Toho University,
2-2-1 Miyama, Funabashi, Chiba 274-8510, Japan}
\email{akiyoshi@is.sci.toho-u.ac.jp}
\keywords{Gorenstein polytope, $h^*$-polynomial, lattice simplex, divisor lattice, finite abelian group}
\subjclass[2020]{52B20}

\begin{document}
\maketitle

\begin{abstract}
Hibi, Yoshida, and the author classified Gorenstein simplices which are not
lattice pyramids and whose \(h^*\)-polynomials are of the form
\(1+t^k+t^{2k}+\cdots+t^{(v-1)k}\)
when \(v\) is a prime number or the product of two prime numbers. They also
conjectured that, for general \(v\), the number of unimodular equivalence
classes of such simplices depends only on the divisor lattice of \(v\). This
paper proves the conjecture by giving a constructive classification of
Gorenstein simplices whose \(h^*\)-polynomials are of this form. More precisely, their
unimodular equivalence classes are shown to be parametrized by strict divisor
chains in the divisor lattice of \(v\) together with certain recursive
combinatorial data. As a consequence, an explicit formula for the number of
equivalence classes in terms of the divisor lattice of \(v\) is obtained.
\end{abstract}

\section{Introduction}

A central problem in Ehrhart theory is to understand how much information about a lattice polytope is encoded in its \(h^*\)-polynomial. In general, many non-equivalent lattice polytopes may have the same \(h^*\)-polynomial. It is therefore natural to ask for classification results within special classes of lattice polytopes. In this paper, we study such a classification problem for Gorenstein simplices whose \(h^*\)-polynomials have the special form
\[
1+t^k+t^{2k}+\cdots+t^{(v-1)k}.
\]

We recall the basic terminology. A \emph{lattice polytope} is a convex polytope all of whose vertices have integer coordinates. Two lattice polytopes \(\Pc,\Qc\subset \RR^d\) are said to be \emph{unimodularly equivalent} if there exist a unimodular matrix \(U\in\ZZ^{d\times d}\) and a lattice point \(\wb\in\ZZ^d\) such that
\[
\Qc=f_U(\Pc)+\wb,
\]
where \(f_U:\RR^d\to\RR^d\) is the linear map defined by \(f_U(\xb)=\xb U\). In classification problems for lattice polytopes, one usually works up to unimodular equivalence.

Let \(\Pc\subset \RR^d\) be a lattice polytope of dimension \(d\). For a positive integer \(m\), set
$L_{\Pc}(m)=|m\Pc\cap \ZZ^d|$,
where $m\Pc:=\{m\xb:\xb\in\Pc\}$.
Ehrhart proved in \cite{Ehrhart} that \(L_{\Pc}(m)\) is a polynomial in \(m\) of degree \(d\) with constant term \(1\). The generating series
${\rm Ehr}_{\Pc}(t)=1+\sum_{m=1}^\infty L_{\Pc}(m)t^m$
can be written as
\[
{\rm Ehr}_{\Pc}(t)=\frac{h^*(\Pc,t)}{(1-t)^{d+1}},
\]
where \(h^*(\Pc,t)\) is a polynomial of degree at most \(d\) with nonnegative integer coefficients \cite{Stanley_nonnegative}. This polynomial is called the \(h^*\)-polynomial of \(\Pc\). If
$h^*(\Pc,t)=\sum_{i=0}^d h_i^*t^i$,
then \(h_0^*=1\), \(h_1^*=|\Pc\cap \ZZ^d|-(d+1)\), and \(h_d^*=|{\rm int}(\Pc)\cap \ZZ^d|\). Moreover,
$h^*(\Pc,1)=\sum_{i=0}^d h_i^*$ coincides with
the normalized volume $\Vol(\Pc)$ of \(\Pc\). We refer the reader to \cite{BeckRobins2015book} for background on Ehrhart theory.

The \emph{lattice pyramid} over \(\Pc\) is
\[
{\rm Pyr}(\Pc):=\conv(\Pc\times\{0\},(0,\ldots,0,1))\subset \RR^{d+1}.
\]
More generally, a lattice polytope is called a lattice pyramid if it is obtained by successively taking lattice pyramids over a lower-dimensional lattice polytope. Since taking lattice pyramids preserves the \(h^*\)-polynomial, and since for a fixed \(h^*\)-polynomial there are only finitely many unimodular equivalence classes up to lattice pyramids, it is natural to exclude lattice pyramids in classification problems with fixed \(h^*\)-polynomials.

A lattice polytope \(\Pc\subset \RR^d\) is called \emph{reflexive} if the origin belongs to the interior of \(\Pc\) and the dual polytope of \(\Pc\) is again a lattice polytope. A lattice polytope \(\Pc\) is called \emph{Gorenstein of index \(r\)} if \(r\Pc\) is unimodularly equivalent to a reflexive polytope. Gorenstein polytopes are characterized by palindromic \(h^*\)-polynomials. In fact, \(\Pc\) is Gorenstein if and only if
\[
h^*(\Pc,t)=1+h_1^*t+\cdots+h_s^*t^s \qquad (h_s^*\neq 0)
\]
satisfies \(h_i^*=h_{s-i}^*\) for all \(0\le i\le s\) \cite{DeNegriHibi}. Thus the polynomial \(1+t^k+t^{2k}+\cdots+t^{(v-1)k}\) naturally appears in the Gorenstein setting.
Gorenstein polytopes arise naturally in several areas such as toric geometry, commutative algebra, and mirror symmetry, and in each fixed dimension there exist only finitely many Gorenstein polytopes up to unimodular equivalence.
For these reasons, the classification of Gorenstein polytopes is a natural and important problem.

In this paper, we focus on Gorenstein simplices. A key tool is the finite subgroup model for lattice simplices. To a lattice simplex \(\Delta\), one can associate a finite subgroup \(\Lambda_\Delta\), and the \(h^*\)-polynomial of \(\Delta\) is recovered as the height enumerator of \(\Lambda_\Delta\). This model also translates lattice pyramids and unimodular equivalence into simple conditions on the associated finite groups, and it has proved useful in the classification of lattice simplices \cite{BatyrevHof, HigashitaniNillTsuchiya,TsuchiyaGorBinary}. We recall the precise correspondence in Section~2.

In \cite{TsuchiyaGorsimp}, the author classified Gorenstein simplices which are not lattice pyramids and whose normalized volumes are prime numbers.
Moreover, for such a simplex, the \(h^*\)-polynomial is necessarily of the form
\[
1+t^k+t^{2k}+\cdots+t^{(p-1)k}
\]
for some prime number \(p\) and some positive integer \(k\).
Motivated by this result, Hibi, Yoshida, and the author studied in \cite{HibiTsuchiyaYoshida} Gorenstein simplices which are not lattice pyramids and whose \(h^*\)-polynomials are of the form
\[
1+t^k+t^{2k}+\cdots+t^{(v-1)k}
\]
with fixed positive integers \(k\) and \(v\ge2\). They classified such simplices when \(v\) is the product of two prime numbers and conjectured that, for general \(v\), the number of unimodular equivalence classes of such simplices depends only on the divisor lattice of \(v\) \cite[Conjecture~4.5]{HibiTsuchiyaYoshida}.

The purpose of this paper is to prove this conjecture. Let \(D_v\) denote the divisor lattice of \(v\).
We first state the classification in a combinatorial form.
\begin{Theorem}\label{thm:intro-classification}
Let \(k\) and \(v\ge2\) be positive integers. The unimodular equivalence classes of Gorenstein simplices which are not lattice pyramids and whose \(h^*\)-polynomial is
\[
1+t^k+t^{2k}+\cdots+t^{(v-1)k}
\]
are in bijection with the following data:
\begin{enumerate}[(i)]
\item a strict divisor chain
\[
1=M_0<M_1<\cdots<M_s=v;
\]
\item for each \(i=1,\ldots,s\), a subset \(J_i\subseteq \Lc_{i-1}\), where
\[
\Lc_0=\emptyset,
\qquad
\Lc_i=(\Lc_{i-1}\setminus J_i)\cup\{\ell_i\},
\]
and \(\ell_i\) is a formal symbol created at the \(i\)-th step.
\end{enumerate}
\end{Theorem}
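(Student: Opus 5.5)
The plan is to work entirely inside the finite subgroup model recalled in Section~2. A $d$-simplex $\Delta$ corresponds to a finite subgroup $\Lambda=\Lambda_\Delta\subset(\RR/\ZZ)^{d+1}$, and $h^*(\Delta,t)=\sum_{\lambda\in\Lambda}t^{\hei(\lambda)}$, where $\hei$ is the sum of the representatives in $[0,1)$. Under this dictionary, $\Delta$ is not a lattice pyramid exactly when no coordinate vanishes identically on $\Lambda$, and unimodular equivalence becomes equality of subgroups up to a permutation of coordinates. So the task is to classify, up to coordinate permutation, subgroups $\Lambda$ of order $v$ whose heights are exactly $0,k,2k,\ldots,(v-1)k$, each attained once, and which have no identically zero coordinate.

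\textbf{Step 1: reduction to a cyclic group.} The element $g$ of height $k$ should generate $\Lambda$. To see this, note that $\hei(\lambda)+\hei(-\lambda)$ equals the number of coordinates in which $\lambda$ is nonzero. Palindromicity (Gorenstein of index $r$ with $(v-1)k+r=d+1$) then pairs $\lambda$ with $-\lambda$ as heights $jk\leftrightarrow(v-1-j)k$ only if $\lambda$ has full support. Combining this with the inequality $\hei(\lambda+\mu)\le\hei(\lambda)+\hei(\mu)$, equality holding iff there are no carries, I would show by induction on $j$ that $jg$ is the unique element of height $jk$. Hence $\Lambda=\langle g\rangle\cong\ZZ/v\ZZ$.

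\textbf{Step 2: extracting the chain.} Write $g=(a_1/v,\ldots,a_{d+1}/v)$. For each coordinate $x$, its value on $\Lambda$ has some order $e_x\mid v$. Since the heights of the multiples $jg$ must increase by exactly $k$ up to $j=v-1$, the "no carry" condition forces strong constraints. I expect to prove that the set of orders $\{e_x\}$ that actually occur, together with $1$, is a chain $1=M_0<M_1<\cdots<M_s=v$ under divisibility. The reason is that two incomparable orders would produce a wrap-around (a carry) at some $j<v$, which would break strict monotonicity of the heights.

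\textbf{Step 3: extracting the recursive data.} Passing to the subgroup of order $M_{s-1}$ (equivalently, the quotient by the coordinates of order $v$) should yield a simplex of the same type for $v'=M_{s-1}$ and some $k'$. This makes an induction on $s$ possible. The coordinates of order $M_i$ fall into groups (blocks) carrying residues $a/M_i$ which sum to a fixed value, and the new block $\ell_i$ created at step $i$ absorbs a subset $J_i$ of the currently "open" blocks $\Lc_{i-1}$. These absorbed blocks are exactly those whose contributions get completed (wrapped to an integer) at level $M_i$; the others survive into $\Lc_i$. The parameter $k$ only fixes the multiplicities needed to make each step contribute height exactly $k$, and the Gorenstein condition determines these uniquely. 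This is why the count depends only on $D_v$.

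\textbf{Converse and main obstacle.} For the converse, I will give an explicit construction of $g$ from the data (chain, $J_i$), verify the height condition and the absence of pyramids, and check that distinct data give subgroups that are not permutation-equivalent, using the invariants $e_x$ and the block structure. The main obstacle is Step 3: proving that every admissible $\Lambda$ has precisely this recursive block form, with no extra freedom in the residues. I would first attack it by analysing carries in $jg$ at $j=M_{i}$ versus $j=M_i-1$, and then use the resulting equalities $\hei((j+1)g)=\hei(jg)+k$ to pin down the residue sums within each block.
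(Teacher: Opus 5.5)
There is a genuine gap, and it starts in Step 1. A group of type $(v,k)$ need not be cyclic, and the element $g$ of height $k$ need not generate it. Subadditivity and distinctness of heights only give $\hei(jg)=jk$ for $j<\ord(g)$, and nothing forces $\ord(g)=v$; your support/palindromicity remark does not supply this. Here are two counterexamples.
\begin{itemize}
\item Take $\cb_1=((1/p)^{[pk]},0^{[p^2k]})$ and $\cb_2=(0^{[pk]},(1/p)^{[p^2k]})$. The elements $t\cb_1+r\cb_2$ ($0\le t,r<p$) have heights $tk+rpk$, so $\langle\cb_1,\cb_2\rangle\cong(\ZZ/p\ZZ)^2$ is of type $(p^2,k)$ with no zero coordinate. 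It is not cyclic.
\item The cyclic group generated by $\cb_2=((1/p^2)^{[pk]},(1/p)^{[(p^2-1)k]})$ is also of type $(p^2,k)$. Its height-$k$ element is $p\cb_2=((1/p)^{[pk]},0^{[(p^2-1)k]})$, which has order $p<v$.
\end{itemize}
Step 2 fails even if cyclicity is granted. The group generated by $((1/p)^{[pk]},0^{[pqk]})$ and $(0^{[pk]},(1/q)^{[pqk]})$ is cyclic of type $(pq,k)$. Its coordinate orders are $p$ and $q$: they are incomparable, so incomparable orders do not force a carry, and $v=pq$ does not occur among them. So the classifying chain is not the chain of coordinate orders. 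Step 3, and your injectivity argument via the $e_x$, therefore have nothing to rest on.

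The missing idea is to use $g$ only to split off the first block, and then iterate.
\begin{itemize}
\item Going from $y+tg$ to $y+(t+1)g$ raises the height by at most $k$. A cyclic-walk argument then shows that the heights on each coset of $A=\langle g\rangle$ form a block $\{qmk,\dots,qmk+(m-1)k\}$, where $m=\ord(g)$. Thus $G/A$, with height $\frac1m\min$ over each coset, is again subadditive and of type $(v/m,k)$.
\item Iterating yields a canonical chain $0=G_0\subset G_1\subset\cdots\subset G_s=G$ with $G_i=G_{i-1}+\langle\cb_i\rangle$, where $\cb_i$ has minimum height in its coset. Here $G_i$ is of type $(M_i,k)$, and the coset $r\cb_i+G_{i-1}$ carries exactly the heights $rM_{i-1}k,\dots,rM_{i-1}k+(M_{i-1}-1)k$. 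The divisor chain is $M_i=|G_i|$.
\item Each step is classified by comparing total heights of $G_{i-1}$, $\cb_i+G_{i-1}$ and $(n_i-1)\cb_i+G_{i-1}$ coordinate by coordinate. This forces every coordinate of $n_i\cb_i\in G_{i-1}$ to lie in $\{0,1/d_r(G_{i-1})\}$, and every new coordinate of $\cb_i$ to be $0$ or $1/n_i$. Conversely, every such element produces an extension.
\item The admissible elements are exactly the subset sums of a set $\Lc(G_{i-1})$ of disjointly supported elements, each equal to $1/d_r$ on its support. This is where the data $J_i\subseteq\Lc_{i-1}$ come from.
\end{itemize}
Your picture of new blocks absorbing open ones matches this bookkeeping, but it has to be built on this tower rather than on a cyclic generator.
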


For a fixed strict divisor chain of length \(s\), the number of possible choices of the subsets \(J_i\) is \(s!\) (Proposition~\ref{prop:fixed-chain-count}). Therefore Theorem~\ref{thm:intro-classification} implies the following counting formula.

\begin{Theorem}\label{thm:intro-main}
Let \(k\) and \(v\ge2\) be positive integers, and let \(N(v,k)\) be the number of unimodular equivalence classes of Gorenstein simplices which are not lattice pyramids and whose \(h^*\)-polynomial is
\[
1+t^k+t^{2k}+\cdots+t^{(v-1)k}.
\]
Then
\[
N(v,k)=\sum_{s\ge1}c_s(D_v)s!,
\]
where \(c_s(D_v)\) denotes the number of strict divisor chains
\[
1=M_0<M_1<\cdots<M_s=v
\]
in \(D_v\). In particular, \(N(v,k)\) depends only on the divisor lattice \(D_v\).
\end{Theorem}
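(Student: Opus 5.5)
The plan is to deduce Theorem~\ref{thm:intro-main} directly from Theorem~\ref{thm:intro-classification}, taken as already established, together with the count of the recursive data for a fixed chain; the counting itself is elementary. By Theorem~\ref{thm:intro-classification}, \(N(v,k)\) equals the number of pairs consisting of a strict divisor chain \(1=M_0<M_1<\cdots<M_s=v\) and a sequence of subsets \(J_1,\ldots,J_s\) satisfying condition (ii). Condition (ii) refers only to the length \(s\) of the chain and not to the values \(M_i\). So I will group the data by the chain: for every chain of length \(s\), the number of admissible sequences \((J_i)\) is the same number \(a_s\). This gives \(N(v,k)=\sum_{s\ge1}c_s(D_v)\,a_s\), and the sum is finite because \(s\) is bounded by the number of prime factors of \(v\) counted with multiplicity. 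Note that \(s\ge1\) is forced by \(v\ge2\).

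The remaining step is to show \(a_s=s!\). This is the content of Proposition~\ref{prop:fixed-chain-count}, and I would prove it as follows. Encode the recursion as a rooted forest: at step \(i\), the new symbol \(\ell_i\) becomes the parent of the symbols in \(J_i\), which are removed from \(\Lc_{i-1}\). Then \(\Lc_i\) is exactly the set of current roots, and a sequence \((J_i)\) is the same thing as a forest on the labels \(\{1,\ldots,s\}\) in which every child has a smaller label than its parent. Because \(\ell_s\) is created last, \(\Lc_s\) always contains \(\ell_s\). For the count itself I would avoid forests and use induction. Set \(|\Lc_0|=0\). Since \(|\Lc_i|=|\Lc_{i-1}|-|J_i|+1\), a sequence of choices is determined step by step, but the number of options at step \(i\) is \(2^{|\Lc_{i-1}|}\), which varies, so a naive product does not work. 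The cleaner route is a bijection with permutations. Reading the forest, each \(\ell_i\) has a unique parent \(\ell_j\) with \(j>i\), or no parent at all. Assigning to \(i\) the value \(j\) in the first case and the value \(0\) (meaning "root") in the second gives a map \(f(i)\in\{0\}\cup\{i+1,\ldots,s\}\), with \(f(s)=0\) forced. These choices are independent: \(J_j\) is exactly the set of \(\ell_i\) with \(f(i)=j\), and each such \(\ell_i\) is indeed still in \(\Lc_{j-1}\), because it can only have been removed at step \(f(i)\). Hence \(a_s=\prod_{i=1}^{s}(s-i+1)=s!\).

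Substituting \(a_s=s!\) gives \(N(v,k)=\sum_{s\ge1}c_s(D_v)s!\). The numbers \(c_s(D_v)\) are defined purely in terms of the poset \(D_v\), since they count maximal-element-to-minimum chains. Therefore \(N(v,k)\) depends only on \(D_v\), and in particular is independent of \(k\).

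The main obstacle is not in this deduction but in Theorem~\ref{thm:intro-classification} itself, which I am assuming. The only delicate point in the plan above is verifying that the map \((J_i)\mapsto f\) is a bijection. In particular, one has to check that every choice of \(f\) yields \(J_j\subseteq\Lc_{j-1}\). That check amounts to the observation that a symbol leaves \(\Lc\) only at the step that adopts it.
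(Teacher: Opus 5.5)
Your proposal is correct, and the outer deduction is the same as the paper's. Theorem~\ref{thm:intro-main} follows from the classification theorem because the data in (ii) depend only on the length $s$ of the chain. So every chain of length $s$ contributes the same number of classes, and that number is $s!$ (Proposition~\ref{prop:fixed-chain-count}).

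Where you differ is in how you prove that count.

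The paper tracks the size of $\Lc_t$ with the generating polynomial $P_t(x)=\sum_r A(t,r)x^r$. When $|\Lc_t|=r$, there are $\binom rj$ choices of $J$ with $|J|=j$, and each yields size $r-j+1$. This gives $P_{t+1}(x)=xP_t(1+x)$, hence $P_s(x)=x(x+1)\cdots(x+s-1)$ and $P_s(1)=s!$.

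You instead encode a sequence $(J_i)$ by the ``parent'' map $f(i)\in\{0\}\cup\{i+1,\ldots,s\}$, equivalently a recursive forest on $\{1,\ldots,s\}$. Your check that every $f$ is realized is exactly what makes the choices independent, even though the number of options $2^{|\Lc_{i-1}|}$ varies from step to step. That check is the observation that a symbol leaves $\Lc$ only at the step that absorbs it, and never returns.

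The two arguments trade off as follows:
\begin{itemize}
\item The paper's recursion is a two-line algebraic computation. It also records, at no extra cost, the distribution of $|\Lc_s|$.
\item Your bijection makes the $s!$ transparent and explains combinatorially why a product formula holds.
\item Your route loses nothing. Since $|\Lc_s|$ is the number of $i$ with $f(i)=0$, weighting the option $f(i)=0$ by $x$ turns your product into $\prod_{i=1}^{s}(x+s-i)=P_s(x)$. This recovers the paper's refined count, whose coefficients are the unsigned Stirling numbers of the first kind.
\end{itemize}
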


Theorem~\ref{thm:intro-classification} is constructive. From the combinatorial data appearing in the theorem, one can explicitly construct the corresponding finite subgroup, and hence the corresponding unimodular equivalence class of Gorenstein simplices through the finite subgroup model. This construction recovers the classification in \cite{HibiTsuchiyaYoshida,TsuchiyaGorsimp} when \(v\) is a prime number or the product of two prime numbers, and it also gives explicit classifications for $v=p^3$, where \(p\) is a prime number.

This paper is organized as follows. In Section~2, we recall the finite subgroup
model for lattice simplices. Section~3 studies one-step extensions. In
Section~4, we introduce a recursive construction and identify the admissible
extension data. Section~5 proves a block decomposition for groups of type
\((v,k)\), constructs a canonical quotient tower, and relates it to a subgroup
chain inside a concrete finite subgroup. In Section~6, we prove the
classification theorem and the counting formula. Finally, Section~7 illustrates
the classification by recovering the cases \(v=p\), \(v=p^2\), and \(v=pq\),
and by working out explicitly the case \(v=p^3\); it also derives general
counting formulas in the prime-power and squarefree cases.

\subsection*{Acknowledgment}
This work was supported by JSPS KAKENHI 22K13890 and 26K00618.

\section{Preliminaries}
In this section, we recall the finite subgroup model for lattice simplices and
introduce the notation used throughout the paper.
\subsection{Finite abelian groups associated to lattice simplices}
We begin by recalling the standard finite abelian group associated with a
lattice simplex.
Let
\[
\Delta=\conv(\vb_0,\dots,\vb_d)\subset \RR^d
\]
be a lattice simplex of dimension \(d\). We define a finite abelian group
\[
\Lambda_\Delta=
\left\{
(x_0,\dots,x_d)\in [0,1)^{d+1}:
\sum_{i=0}^d x_i(\vb_i,1)\in \ZZ^{d+1}
\right\},
\]
where the group operation is addition modulo \(1\) in each coordinate. For \(\xb=(x_0,\dots,x_d)\in \Lambda_\Delta\), we define its height by \(\hei(\xb):=\sum_{i=0}^d x_i\). Since \(\sum_{i=0}^d x_i(\vb_i,1)\in \ZZ^{d+1}\), the number \(\hei(\xb)\) is always an integer.

The importance of \(\Lambda_\Delta\) for us is that it encodes both the
\(h^*\)-polynomial of \(\Delta\) and the condition that \(\Delta\) is a lattice
pyramid. We recall these well-known facts.

\begin{Lemma}\label{lem:hstar-lambda}
Let \(\Delta\) be a lattice simplex of dimension \(d\), and write
\[
h^*(\Delta,t)=h_0^*+h_1^*t+\cdots+h_d^*t^d.
\]
Then, for any \(0\le i\le d\),
\[
h_i^*=|\{\xb\in \Lambda_\Delta:\hei(\xb)=i\}|.
\]
In particular,
\[
|\Lambda_\Delta|=h^*(\Delta,1)=\Vol(\Delta).
\]
\end{Lemma}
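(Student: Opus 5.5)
The plan is the standard cone-and-fundamental-parallelepiped argument. Consider the cone
\[
C=\Bigl\{\textstyle\sum_{i=0}^d \lambda_i(\vb_i,1):\lambda_i\ge 0\Bigr\}\subset\RR^{d+1}.
\]
Its lattice points at last coordinate \(m\) are exactly the points of \(m\Delta\cap\ZZ^d\) placed at height \(m\). Therefore
\[
{\rm Ehr}_\Delta(t)=\sum_{\zb\in C\cap\ZZ^{d+1}}t^{z_{d+1}},
\]
where \(z_{d+1}\) is the last coordinate. Since \(\Delta\) is a \(d\)-simplex, the vectors \((\vb_i,1)\) are linearly independent. Hence every point of \(C\) can be written uniquely as \(\zb=\sum_i(\lfloor\lambda_i\rfloor+\{\lambda_i\})(\vb_i,1)\).

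The key step is the decomposition of the lattice points of the cone. Every lattice point \(\zb\in C\) can be written uniquely as \(\zb=\yb+\sum_i n_i(\vb_i,1)\). Here \(n_i\in\ZZ_{\ge0}\), and \(\yb=\sum_i x_i(\vb_i,1)\) with \(x_i\in[0,1)\). Since \(\zb\) and \(\sum_i n_i(\vb_i,1)\) are integral, \(\yb\) is integral. So \((x_0,\dots,x_d)\in\Lambda_\Delta\).

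Conversely, every element of \(\Lambda_\Delta\) paired with every choice of \((n_i)\) gives a lattice point of \(C\). So \(C\cap\ZZ^{d+1}\) is in bijection with \(\Lambda_\Delta\times\ZZ_{\ge0}^{d+1}\). Under this bijection the last coordinate is additive: it equals \(\hei(\xb)+\sum_i n_i\), because each \((\vb_i,1)\) has last coordinate \(1\). Summing the generating function gives
\[
{\rm Ehr}_\Delta(t)=\Bigl(\sum_{\xb\in\Lambda_\Delta}t^{\hei(\xb)}\Bigr)\cdot\frac{1}{(1-t)^{d+1}}.
\]

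Comparing with the definition of \(h^*(\Delta,t)\) gives \(h^*(\Delta,t)=\sum_{\xb\in\Lambda_\Delta}t^{\hei(\xb)}\), so \(h^*_i\) counts the elements of height \(i\). One should also note that \(0\le\hei(\xb)<d+1\) and that heights are integers. Hence all exponents lie in \(\{0,\dots,d\}\), which is consistent with the indexing in the statement.

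Setting \(t=1\) gives \(|\Lambda_\Delta|=h^*(\Delta,1)\). The equality \(h^*(\Delta,1)=\Vol(\Delta)\) is the standard fact recalled in the introduction. Alternatively, \(|\Lambda_\Delta|\) is the index of the lattice generated by the \((\vb_i,1)\) in \(\ZZ^{d+1}\), namely \(|\det[(\vb_i,1)]|=\Vol(\Delta)\).

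The main point needing care is the uniqueness and integrality in the decomposition step, which relies on linear independence. I do not expect any real obstacle.
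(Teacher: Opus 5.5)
Your proposal is correct: the decomposition $C\cap\ZZ^{d+1}\leftrightarrow\Lambda_\Delta\times\ZZ_{\ge0}^{d+1}$, with last coordinate $\hei(\xb)+\sum_i n_i$, is the standard fundamental-parallelepiped proof. The paper gives no proof of this lemma and simply recalls it as a well-known fact, so your argument supplies the usual justification behind that citation. Your determinant argument for $|\Lambda_\Delta|=\Vol(\Delta)$ is also valid.
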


\begin{Lemma}[\cite{BatyrevHof}]\label{lem:pyramid-lambda}
Let \(\Delta\) be a lattice simplex of dimension \(d\). Then \(\Delta\) is a lattice pyramid if and only if there exists \(i\in\{0,\dots,d\}\) such that \(x_i=0\) for all \((x_0,\dots,x_d)\in \Lambda_\Delta\).
\end{Lemma}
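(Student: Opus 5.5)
We prove the statement immediately above, Lemma~\ref{lem:pyramid-lambda}, by showing both directions separately. Throughout, write $\wb_i=(\vb_i,1)\in\ZZ^{d+1}$. These vectors are linearly independent, and they span the cone over $\Delta\times\{1\}$.

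First suppose $\Delta$ is a lattice pyramid. After a unimodular equivalence, which permutes and re-expresses nothing in $\Lambda_\Delta$ beyond relabeling the vertices, we may assume $\Delta=\mathrm{Pyr}(\Delta')$ with apex $\vb_d=(0,\dots,0,1)$ and $\vb_0,\dots,\vb_{d-1}\in\RR^{d-1}\times\{0\}$. We must first check that $\Lambda_\Delta$ is invariant under unimodular equivalence. An affine unimodular map $\xb\mapsto \xb U+\wb$ lifts to the linear map of $\ZZ^{d+1}$ given by $(\xb,h)\mapsto(\xb U+h\wb,h)$. This lift is a lattice automorphism, so the condition $\sum x_i\wb_i\in\ZZ^{d+1}$ is preserved.

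Now take $\xb\in\Lambda_\Delta$ and look at the $d$-th coordinate of $\sum_i x_i\wb_i$. Only $\vb_d$ has a nonzero $d$-th coordinate, so this coordinate equals $x_d$. Integrality forces $x_d\in\ZZ\cap[0,1)=\{0\}$. Iterated pyramids are handled the same way, using the outermost apex.

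Conversely, suppose $x_d=0$ for all $\xb\in\Lambda_\Delta$. The key observation is that $\Lambda_\Delta$ is canonically isomorphic to $\ZZ^{d+1}/L$, where $L=\bigoplus_i\ZZ\wb_i$. The isomorphism sends a class to its coefficient vector taken mod $1$. Hence the hypothesis says that every lattice point of $\ZZ^{d+1}$ lies in $L'+\ZZ\wb_d$, where $L'=\mathrm{span}_\RR(\wb_0,\dots,\wb_{d-1})\cap\ZZ^{d+1}$. Indeed, writing $\zb=\sum y_i\wb_i$ with $y_i\in\RR$, the hypothesis forces $y_d\in\ZZ$, so $\zb-y_d\wb_d\in L'$.

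Therefore $\ZZ^{d+1}=L'\oplus\ZZ\wb_d$. Choose a basis of $\ZZ^{d+1}$ consisting of a basis of $L'$ followed by $\wb_d$. In these coordinates, the $\wb_i$ with $i<d$ have last coordinate $0$, and $\wb_d=e_{d+1}$.

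The remaining task, and the main technical step, is to convert this linear decomposition into an affine unimodular equivalence preserving the height function. First translate so that $\vb_d=0$, making $\wb_d=e_{d+1}$ and leaving every $\wb_i$ with height $1$. Then $L'$ is a saturated sublattice not containing $e_{d+1}$, and it complements $\ZZ e_{d+1}$. Hence the projection $\RR^{d+1}\to\RR^d\times\{0\}$ along $e_{d+1}$ is compatible with a basis change of the form $(\ub,h)\mapsto(\ub\,U,h)$ composed with a shear. This lets us choose a unimodular $U$ of $\ZZ^d$ carrying $\vb_0,\dots,\vb_{d-1}$ into a hyperplane $\{y_d=0\}$ and $\vb_d=0$ to $0$.

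Finally, translate the apex to $(0,\dots,0,1)$. Since $L'$ is a direct summand, the apex direction can be taken as the last standard basis vector. This exhibits $\Delta$ as $\mathrm{Pyr}$ of the facet $\conv(\vb_0,\dots,\vb_{d-1})$. I expect the care needed in this last bookkeeping, matching the linear splitting with an affine change of coordinates, to be the only real obstacle.
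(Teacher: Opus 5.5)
Your forward direction is correct, and so is the first half of the converse: the identification $\Lambda_\Delta\cong\ZZ^{d+1}/\bigoplus_i\ZZ\wb_i$ and the resulting splitting $\ZZ^{d+1}=L'\oplus\ZZ\wb_d$ are right, and they carry the real content. (The paper gives no proof of this lemma; it is quoted from Batyrev--Hofscheier, so your argument has to stand on its own.) The gap is in the last step, the one you flag as the main obstacle, and what you write there fails. After translating so that $\vb_d=0$, you claim there is a unimodular $U$ carrying $\vb_0,\dots,\vb_{d-1}$ into $\{y_d=0\}$ and $\vb_d=0$ to $0$. But $\yb\mapsto\yb U$ is linear and invertible, so this would put all $d+1$ vertices of $\Delta$ in the linear hyperplane $\{y_d=0\}$, which contradicts $\dim\Delta=d$. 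Your subsequent ``translate the apex to $(0,\dots,0,1)$'' would also move the base off $\{y_d=0\}$. The sentence about the projection being ``compatible with a basis change \dots composed with a shear'' does not supply the missing content.

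What is missing is to extract from the splitting a primitive functional that puts the apex at lattice distance one from the facet. Once $\vb_d=0$, so that $\wb_d=e_{d+1}$, the map $\ZZ^{d+1}\to\ZZ^d$ forgetting the last coordinate restricts to an isomorphism $L'\to\ZZ^d$. This holds precisely because $\ZZ^{d+1}=L'\oplus\ZZ e_{d+1}$. Hence $L'=\{(\yb,\gb\cdot\yb):\yb\in\ZZ^d\}$ for some integer vector $\gb\in\ZZ^d$. Since $(\vb_i,1)\in L'$ for $i<d$, you get $\gb\cdot\vb_i=1$ for all $i<d$, while $\gb\cdot\vb_d=0$. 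In particular $\gb$ is primitive, so it is the last column of some unimodular $U$, and then $(\yb U)_d=\gb\cdot\yb$. The affine unimodular map $\yb\mapsto e_d-\yb U$ sends $\vb_d$ to $(0,\dots,0,1)$ and each $\vb_i$ with $i<d$ into $\RR^{d-1}\times\{0\}$. This exhibits $\Delta$ as $\mathrm{Pyr}$ over the image of the facet $\conv(\vb_0,\dots,\vb_{d-1})$. With this replacement for your final two paragraphs, the proof is complete.
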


We also recall that lattice simplices are classified by these finite abelian
subgroups.
\begin{Proposition}[\cite{BatyrevHof}]\label{prop:delta-lambda-correspondence}
Let \(d\ge1\). Then there is a bijection between unimodular equivalence classes of \(d\)-dimensional lattice simplices with ordered vertices and finite abelian subgroups
$G\subset [0,1)^{d+1}$
such that \(\sum_{i=0}^d x_i\in \ZZ\) for all \((x_0,\dots,x_d)\in G\). Consequently, unimodular equivalence classes of \(d\)-dimensional lattice simplices are in bijection with such finite abelian subgroups up to permutation of coordinates.
\end{Proposition}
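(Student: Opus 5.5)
The plan is to write down the standard construction that goes from a simplex to a group and from a group back to a simplex, and then check that these two assignments are mutually inverse on equivalence classes.

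\textbf{From simplices to groups.} For an ordered simplex $\Delta=\conv(\vb_0,\dots,\vb_d)$, the set $\Lambda_\Delta$ is a subgroup of $(\RR/\ZZ)^{d+1}$, since the defining condition is $\ZZ$-linear. It is finite because it injects into $\ZZ^{d+1}/\Lambda$, where $\Lambda$ is the lattice spanned by the vectors $(\vb_i,1)$; this lattice has full rank, and the injection is $\xb\mapsto\sum x_i(\vb_i,1)$. Taking the last coordinate shows that $\sum x_i\in\ZZ$. If $\Qc=f_U(\Delta)+\wb$ with the induced vertex order, the linear map $(\yb,t)\mapsto(\yb U+t\wb,t)$ lies in $GL_{d+1}(\ZZ)$ and sends $(\vb_i,1)$ to the corresponding vertex vector of $\Qc$. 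It preserves integrality in both directions, so $\Lambda_\Delta=\Lambda_{\Qc}$ and the map is well defined on classes.

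\textbf{Injectivity.} The key observation is that $\Lambda_\Delta$ determines the lattice $\Lambda$. Let $V$ be the matrix with rows $(\vb_i,1)$. Then $\Lambda_\Delta\cong\ZZ^{d+1}/\Lambda$ via $\xb\mapsto \xb V$; surjectivity holds because every integer point has real coordinates with respect to the basis, and these can be reduced mod $\ZZ$. Equivalently, the group $\{\xb\in\RR^{d+1}:\xb V\in\ZZ^{d+1}\}=\ZZ^{d+1}V^{-1}$ is the preimage of $\Lambda_\Delta$ under the projection $\RR^{d+1}\to(\RR/\ZZ)^{d+1}$. Now suppose $\Lambda_\Delta=\Lambda_{\Delta'}$. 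Then $\ZZ^{d+1}V^{-1}=\ZZ^{d+1}V'^{-1}$, so $W=V^{-1}V'$ satisfies $\ZZ^{d+1}W=\ZZ^{d+1}$, which means $W\in GL_{d+1}(\ZZ)$. Moreover $V'=VW$, and the last column of both $V$ and $V'$ is all ones. Write $M=W$. The first column of $V$ satisfies $\mathbf 1=VMe_{d+1}$, and since $V$ is invertible this forces $Me_{d+1}=e_{d+1}$. Hence $M=\begin{pmatrix}U&0\\ \wb&1\end{pmatrix}$ with $U\in GL_d(\ZZ)$ and $\wb\in\ZZ^d$. Therefore $\vb'_i=\vb_iU+\wb$, which is exactly an ordered unimodular equivalence.

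\textbf{Surjectivity.} This is the step I expect to need the most care. Start with a finite group $G\subset[0,1)^{d+1}$ with integral heights, and let $\widetilde G$ be its preimage in $\RR^{d+1}$. Then $\widetilde G$ is a lattice containing $\ZZ^{d+1}$ with finite index, so $\widetilde G=\ZZ^{d+1}B$ for some $B\in GL_{d+1}(\QQ)$. Set $V=B^{-1}$; the rows of $V$ span the dual lattice $\widetilde G^{*}\subseteq\ZZ^{d+1}$. The height condition says $\mathbf 1\in\widetilde G^*$. We need to choose the basis so that each row has last coordinate $1$, which amounts to finding a basis of $\widetilde G^*$ in which the functional "last coordinate" takes the value $1$ on every basis vector. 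Since $\mathbf 1\cdot$ and the standard coordinates interact as follows, I would argue more intrinsically: extend $\mathbf 1$-dual data. Concretely, define the points $\vb_i$ via the linear map $\ZZ^{d+1}\to\widetilde G^*$ (in dual coordinates) given by $e_i\mapsto e_i$. Then choose a $\ZZ$-basis of $\widetilde G^*$ whose last element is the primitive vector $\mathbf 1$; this is possible because $\mathbf 1$ is primitive, as its coordinates have gcd $1$. In these coordinates the standard vectors $e_i\in\ZZ^{d+1}\subseteq\widetilde G^*$ have last coordinate $e_i\cdot\mathbf 1$-dual equal to $1$ after dualizing. This dualization is exactly where the bookkeeping of the height condition needs to be checked. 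The resulting $\vb_i$ satisfy $\Lambda_\Delta=G$, and they are affinely independent because $V$ is invertible, so $\Delta$ is a $d$-simplex.

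\textbf{The unordered statement.} Permuting the vertices permutes the coordinates of $\Lambda_\Delta$. Passing to orbits under the symmetric group on both sides therefore gives the unordered bijection.
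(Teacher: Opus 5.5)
The paper gives no proof of this proposition (it is quoted from \cite{BatyrevHof}), so your argument has to stand on its own. Well-definedness, injectivity, and the passage to unordered simplices are fine. In the injectivity step you mean the \emph{last} column: \(Ve_{d+1}=V'e_{d+1}=\mathbf{1}\) and \(V'=VW\) give \(We_{d+1}=e_{d+1}\).

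The gap is surjectivity, which as written is not established. With \(\widetilde G=\ZZ^{d+1}B\) and \(V=B^{-1}\), the identity \(BV=I\) says that the \emph{columns} of \(V\) form the basis dual to the rows of \(B\). So the columns, not the rows, span \(\widetilde G^*\); the rows of \(V\) are what should become the lifted vertices \((\vb_i,1)\). Because of this mix-up, you first recast ``every row ends in \(1\)'' as ``the last-coordinate functional equals \(1\) on every basis vector of \(\widetilde G^*\)'', which is not the relevant condition. You then switch to the right fact (a basis of \(\widetilde G^*\) ending in \(\mathbf{1}\)) without connecting it to the vertices. You also write \(\ZZ^{d+1}\subseteq\widetilde G^*\), the reverse of the inclusion you correctly stated a line earlier. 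Finally you assert \(\Lambda_\Delta=G\) while explicitly leaving the ``bookkeeping'' unchecked. So neither the construction of \(\vb_0,\dots,\vb_d\) nor the identity \(\Lambda_\Delta=G\) is actually carried out.

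The repair is short, and your primitivity observation is the key. What you need is that \(\mathbf{1}\) be a \emph{column} of \(V\), i.e.\ a member of a \(\ZZ\)-basis of \(\widetilde G^*\).
\begin{itemize}
\item The height condition is exactly \(\mathbf{1}\in\widetilde G^*\).
\item Since \(\mathbf{1}\) is primitive in \(\ZZ^{d+1}\supseteq\widetilde G^*\), it is primitive in \(\widetilde G^*\), so it extends to a basis \(\yb_1,\dots,\yb_d,\mathbf{1}\) of \(\widetilde G^*\).
\item Let \(V\) be the matrix with these columns. It is integral and invertible with last column \(\mathbf{1}\), so its rows are \((\vb_0,1),\dots,(\vb_d,1)\) with affinely independent \(\vb_i\in\ZZ^d\).
\item For \(\xb\in[0,1)^{d+1}\), one has \(\xb V\in\ZZ^{d+1}\) if and only if \(\xb\cdot\yb\in\ZZ\) for all \(\yb\in\widetilde G^*\), if and only if \(\xb\in(\widetilde G^*)^*=\widetilde G\), if and only if \(\xb\in G\). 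Hence \(\Lambda_\Delta=G\).
\end{itemize}
In your matrix language, this amounts to replacing \(V=B^{-1}\) by \(VQ\), where \(Q\in GL_{d+1}(\ZZ)\) has last column \(B\mathbf{1}\). This does not change \(\ZZ^{d+1}(VQ)^{-1}=\widetilde G\). Such a \(Q\) exists because \(B\mathbf{1}\) is integral and primitive. It is integral since its entries are coordinate sums of elements of \(\widetilde G\), which are integers by the height condition. It is primitive since \(V(B\mathbf{1})=\mathbf{1}\) with \(V\) integral.
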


\subsection{Groups of type \((v,k)\)}

Let \(H\) be a finite abelian group endowed with an integer-valued function \(\hei:H\to \ZZ_{\ge0}\). Let \(v\ge2\) and \(k\ge1\) be integers. We say that \(H\) is of \emph{type \((v,k)\)} if \(|H|=v\) and
\[
\{\hei(\xb):\xb\in H\}=\{0,k,2k,\ldots,(v-1)k\}.
\]
Equivalently, the heights of the elements of \(H\) are exactly \(0,k,2k,\ldots,(v-1)k\), each occurring once.
If \(G\subset[0,1)^N\) is a finite subgroup such that \(\hei(\xb)\in \ZZ\) for all \(\xb\in G\), then we regard \(G\) as a finite abelian group with height function \(\hei\). We say that the \(i\)-th coordinate of \(G\) is zero if \(x_i=0\) for all \(\xb=(x_1,\ldots,x_N)\in G\), and that \(G\) has no zero coordinate if no coordinate is zero in this sense.

From Lemmas~\ref{lem:hstar-lambda} and \ref{lem:pyramid-lambda}, together with Proposition~\ref{prop:delta-lambda-correspondence}, we obtain the following.

\begin{Proposition}\label{prop:type-vk-simplex}
Let \(v\ge2\) and \(k\ge1\). Under the correspondence in Proposition~\ref{prop:delta-lambda-correspondence}, the unimodular equivalence classes of lattice simplices which are not lattice pyramids and whose \(h^*\)-polynomial is
\[
1+t^k+t^{2k}+\cdots+t^{(v-1)k}
\]
correspond to coordinate-permutation equivalence classes of finite subgroups \(G\subset [0,1)^{d+1}\) of type \((v,k)\) with no zero coordinate.
\end{Proposition}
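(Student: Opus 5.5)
The plan is to combine the three cited results directly, checking each of the two conditions separately. Start with a lattice simplex \(\Delta\) of dimension \(d\). By Proposition~\ref{prop:delta-lambda-correspondence}, unimodular equivalence classes of such simplices are in bijection with finite subgroups \(G\subset[0,1)^{d+1}\) with integral heights, taken up to permutation of coordinates. The map sends \(\Delta\) to \(\Lambda_\Delta\). Once we fix an ordering of the vertices, this map is well defined, and reordering the vertices permutes the coordinates of \(\Lambda_\Delta\). The first step is therefore to record that both properties we care about, the \(h^*\)-polynomial and having a zero coordinate, are invariant under coordinate permutation. This holds because the height \(\sum_i x_i\) is symmetric in the coordinates, and a zero coordinate stays a zero coordinate after permuting. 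Hence it is meaningful to restrict the bijection to the subclasses in question.

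Next I would show that the \(h^*\)-polynomial condition corresponds exactly to type \((v,k)\). By Lemma~\ref{lem:hstar-lambda}, \(h_i^*\) is the number of elements of \(\Lambda_\Delta\) of height \(i\). Suppose \(h^*(\Delta,t)=1+t^k+\cdots+t^{(v-1)k}\). Then exactly one element has height \(jk\) for each \(0\le j\le v-1\), and no element has any other height. So \(|\Lambda_\Delta|=v\), and the set of heights is \(\{0,k,\dots,(v-1)k\}\) with each value occurring once. This is type \((v,k)\); I use the "each occurring once" reformulation, which is automatic because the \(v\) distinct values are attained by a group of order \(v\). Conversely, if \(\Lambda_\Delta\) is of type \((v,k)\), the same height count gives that \(h^*\)-polynomial. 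Since heights lie in \(\{0,\dots,d\}\), this also forces \((v-1)k\le d\), so no extra constraint appears.

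Finally I would handle the pyramid condition. By Lemma~\ref{lem:pyramid-lambda}, \(\Delta\) is a lattice pyramid exactly when \(\Lambda_\Delta\) has a zero coordinate. Thus \(\Delta\) is not a pyramid if and only if \(\Lambda_\Delta\) has no zero coordinate. Restricting the bijection of Proposition~\ref{prop:delta-lambda-correspondence} to both subclasses then gives the statement.

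The only delicate point is surjectivity together with the dimension bookkeeping. Any subgroup \(G\subset[0,1)^{d+1}\) with integral heights does arise as \(\Lambda_\Delta\) for some \(d\)-dimensional \(\Delta\); this is supplied by Proposition~\ref{prop:delta-lambda-correspondence}. The dimension \(d\) is not fixed in the statement, so the correspondence should be read as the union over all \(d\ge1\). I expect no genuine obstacle; the proof is a direct assembly of the cited results.
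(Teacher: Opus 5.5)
Your proposal is correct and takes the same route as the paper: the paper obtains this proposition in one line, by combining Lemma~\ref{lem:hstar-lambda}, Lemma~\ref{lem:pyramid-lambda}, and Proposition~\ref{prop:delta-lambda-correspondence}. Your extra checks (invariance under coordinate permutation, the pigeonhole step giving each height exactly once, and the bound $\hei\le d$) just make explicit what the paper leaves implicit.
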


\subsection{Coordinate projections}

Let \(G\subset[0,1)^N\) be a finite subgroup. For each coordinate \(i\), consider the projection
\[
\pi_i:G\to [0,1),
\qquad
\xb=(x_1,\dots,x_N)\mapsto x_i,
\]
where \([0,1)\) is regarded as an abelian group under addition modulo \(1\). We define \(d_i(G):=|\im(\pi_i)|\).

\begin{Lemma}\label{lem:coordinate-values}
Let \(G\subset[0,1)^N\) be a finite subgroup of order \(M\), and put \(d_i=d_i(G)\). Then, as \(\xb=(x_1,\ldots,x_N)\) runs over \(G\), the \(i\)-th coordinate \(x_i\) takes the values
\[
0,\frac1{d_i},\frac2{d_i},\ldots,\frac{d_i-1}{d_i}
\]
each exactly \(M/d_i\) times.
\end{Lemma}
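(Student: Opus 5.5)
The plan is to reduce the statement to a fact about homomorphisms from a finite abelian group into the circle group \([0,1)=\RR/\ZZ\). The map \(\pi_i:G\to[0,1)\), \(\xb\mapsto x_i\), is a group homomorphism, because addition in \(G\) is coordinatewise addition modulo \(1\). Hence \(\im(\pi_i)\) is a subgroup of \([0,1)\), and it is finite of order \(d_i=d_i(G)\) since \(G\) is finite.

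Step 1 identifies this image. Every finite subgroup of \(\RR/\ZZ\) of order \(d\) is the cyclic group \(\{0,1/d,\dots,(d-1)/d\}\). To see this, take any element \(y\) of the subgroup. By Lagrange, \(dy\equiv 0 \pmod 1\), so \(y\in\frac1d\ZZ/\ZZ\). Thus the subgroup is contained in \(\frac1d\ZZ/\ZZ\), which has exactly \(d\) elements, and since the orders agree the two coincide. Consequently the values taken by \(x_i\) are exactly \(0,\frac1{d_i},\dots,\frac{d_i-1}{d_i}\), using the representatives in \([0,1)\).

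Step 2 counts multiplicities. The fibres of the homomorphism \(\pi_i\) are the cosets of \(\Ker(\pi_i)\). By the first isomorphism theorem, \(|\Ker(\pi_i)|=|G|/|\im(\pi_i)|=M/d_i\). So each value \(j/d_i\) is attained by exactly \(M/d_i\) elements of \(G\).

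No step is a real obstacle. The only point requiring care is the classification of finite subgroups of \(\RR/\ZZ\) in Step 1, and it follows directly from Lagrange's theorem as shown.
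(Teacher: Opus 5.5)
Your proposal is correct and follows the same route as the paper: identify $\im(\pi_i)$ as the unique subgroup of $[0,1)$ of order $d_i$, then count fibres as cosets of $\Ker(\pi_i)$, each of size $M/d_i$. The only difference is that you justify the uniqueness of the order-$d_i$ subgroup via Lagrange's theorem, a fact the paper simply asserts.
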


\begin{proof}
The image of \(\pi_i\) has order \(d_i\). Since \([0,1)\), endowed with addition modulo \(1\), has a unique subgroup of order \(d_i\), namely
\[
\left\{
0,\frac1{d_i},\frac2{d_i},\ldots,\frac{d_i-1}{d_i}
\right\},
\]
we have
\[
\im(\pi_i)=
\left\{
0,\frac1{d_i},\frac2{d_i},\ldots,\frac{d_i-1}{d_i}
\right\}.
\]
Moreover, every fiber of \(\pi_i\) has cardinality
\[
|\Ker(\pi_i)|=\frac{|G|}{|\im(\pi_i)|}=\frac{M}{d_i}.
\]
Thus each value occurs exactly \(M/d_i\) times.
\end{proof}

\section{One-step extensions}

In this section, we study how a finite subgroup can be enlarged by adjoining one
new generator. We first construct such extensions explicitly, and then prove a
converse.

From now on, we fix a finite subgroup \(G\subset [0,1)^N\) of type \((M,k)\)
with no zero coordinate, and an integer \(n\ge2\). We will study extensions
obtained by adjoining one element whose image generates a cyclic quotient of
order \(n\). We denote by \(C_n\) the cyclic group of order \(n\).

We first explain how to construct such an extension from a suitable element of
\(G\).
\begin{Lemma}\label{lem:construction-from-h}
Let \(G\subset [0,1)^N\) be of type \((M,k)\) and have no zero coordinate. Let
\(\hb\in G\) satisfy
\[
h_i\in\left\{0,\frac1{d_i(G)}\right\}
\qquad
(i=1,\ldots,N).
\]
Set \(L:=nMk-\hei(\hb)\). Embed \(G\) into \([0,1)^{N+L}\) by adding \(L\) zero
coordinates. We refer to the first \(N\) coordinates as the old coordinates and
to the last \(L\) coordinates as the new coordinates. Define an element
\(\cb\in [0,1)^{N+L}\) by
\[
c_i=
\begin{cases}
\dfrac{h_i}{n} & (i=1,\ldots,N),\\[2mm]
\dfrac1n & (i=N+1,\ldots,N+L).
\end{cases}
\]
Let \(G':=G+\langle \cb\rangle\). Then \(G'/G\cong C_n\), \(n\cb=\hb\), and \(G'\) is of type \((Mn,k)\). Moreover,
the heights in the coset \(r\cb+G\) are
\[
rMk,\ rMk+k,\ \ldots,\ rMk+(M-1)k
\]
for each \(r=0,1,\ldots,n-1\).
\end{Lemma}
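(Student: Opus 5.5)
The plan is a direct computation. I will describe every element of \(G'\) as \(r\cb+\gb\) with \(0\le r\le n-1\) and \(\gb\in G\), and compute its height explicitly. The key observation is that, thanks to the hypothesis \(h_i\in\{0,1/d_i(G)\}\), adding \(r\cb\) to \(\gb\) causes no reduction modulo \(1\) in the old coordinates. Heights then become additive, and everything else follows.

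\textbf{Preliminary checks.} First, \(L\) is a positive integer. Indeed \(\hei(\hb)\le (M-1)k<nMk\), since \(G\) is of type \((M,k)\) and \(\hb\in G\). Next, \(n\cb=\hb\). In the old coordinates \(n\cdot h_i/n=h_i\). In the new coordinates \(n\cdot(1/n)=1\equiv 0\), and \(\hb\), viewed in \([0,1)^{N+L}\), has zeros there. Consequently \(G'=\{r\cb+\gb: 0\le r\le n-1,\ \gb\in G\}\), and \(G'/G\) is cyclic, generated by the class of \(\cb\), of order dividing \(n\).

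\textbf{Height computation.} Fix \(0\le r\le n-1\) and \(\gb\in G\). By Lemma~\ref{lem:coordinate-values}, \(g_i\in\{0,1/d_i,\dots,(d_i-1)/d_i\}\) with \(d_i=d_i(G)\). Since \(h_i\le 1/d_i\), we get
\[
g_i+\frac{r h_i}{n}\le \frac{d_i-1}{d_i}+\frac{r}{n d_i}<1 .
\]
So the \(i\)-th old coordinate of \(r\cb+\gb\) is exactly \(g_i+rh_i/n\), with no wraparound. The new coordinates of \(r\cb+\gb\) are exactly \(r/n\), because \(\gb\) is zero there and \(r<n\). Summing gives
\[
\hei(r\cb+\gb)=\hei(\gb)+\frac{r\,\hei(\hb)}{n}+\frac{rL}{n}=\hei(\gb)+\frac{r\,nMk}{n}=\hei(\gb)+rMk .
\]
This is an integer, so \(G'\) is an admissible subgroup with integer heights. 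As \(\gb\) runs over \(G\), \(\hei(\gb)\) runs over \(0,k,\dots,(M-1)k\). Hence the coset \(r\cb+G\) has heights \(rMk,\,rMk+k,\dots,rMk+(M-1)k\), as claimed.

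\textbf{Conclusion.} The height ranges of the cosets \(r\cb+G\) for \(r=0,\dots,n-1\) are pairwise disjoint. Therefore these \(n\) cosets are distinct, so the order of \(\cb\) modulo \(G\) is exactly \(n\), giving \(G'/G\cong C_n\) and \(|G'|=nM\). The union of the height ranges is exactly \(\{0,k,\dots,(nM-1)k\}\), each value attained once. Hence \(G'\) is of type \((Mn,k)\).

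The only delicate point is the no-wraparound estimate in the old coordinates. This is precisely where the hypothesis \(h_i\in\{0,1/d_i(G)\}\) is used, and once it is established the rest is bookkeeping.
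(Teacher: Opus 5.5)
Your proof is correct and follows essentially the same route as the paper. The key step in both is the no-wraparound estimate \(g_i+rc_i\le 1-\frac{1}{nd_i(G)}<1\) on the old coordinates, which makes heights additive and gives \(\hei(r\cb+\gb)=\hei(\gb)+rMk\).

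The only cosmetic difference is how you show that \(\cb\) has order exactly \(n\) modulo \(G\). You deduce it from the disjointness of the height blocks. The paper reads it off directly from the new \(1/n\)-coordinates: for \(0<r<n\), \(r\cb\) is nonzero on coordinates where \(G\) vanishes. Both arguments are fine.
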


\begin{proof}
Since \(G\) is of type \((M,k)\), every element of \(G\) has height at most \((M-1)k\). Hence \(0\le \hei(\hb)\le (M-1)k\), so \(L=nMk-\hei(\hb)>0\). The new \(1/n\)-coordinates ensure that the image of \(\cb\) in \(G'/G\) has order exactly \(n\). Hence \(G'/G\cong C_n\).

By construction,
\[
\hei(\cb)=\frac{\hei(\hb)}{n}+\frac{L}{n}=Mk.
\]
On the old coordinates, we have \(nc_i=h_i\), while on each new coordinate \(nc_i=1\equiv0\pmod 1\). Since \(\hb\) is embedded with zero new coordinates, it follows that \(n\cb=\hb\).

We claim that no carry occurs when adding \(r\cb\) to an element of \(G\), for \(0\le r\le n-1\). Viewing \(G\) as a subgroup of \([0,1)^{N+L}\) via this embedding, let
\(\xb=(x_1,\ldots,x_{N+L})\in G\). If \(h_i=0\), then \(c_i=0\). If \(h_i=1/d_i(G)\), then \(c_i=1/(nd_i(G))\). By Lemma~\ref{lem:coordinate-values}, the possible values of \(x_i\) are
\[
0,\frac1{d_i(G)},\ldots,\frac{d_i(G)-1}{d_i(G)},
\]
so \(x_i\le (d_i(G)-1)/d_i(G)\). Since \(r\le n-1\), we have \(rc_i\le (n-1)/(nd_i(G))\), and therefore
\[
x_i+rc_i
\le
\frac{d_i(G)-1}{d_i(G)}+\frac{n-1}{nd_i(G)}
=
1-\frac1{nd_i(G)}
<1.
\]
Thus no carry occurs on the old coordinates. On each new coordinate, \(x_i=0\) and \(rc_i=r/n<1\), so there is no carry there either.

Therefore \(\hei(\xb+r\cb)=\hei(\xb)+r\hei(\cb)=\hei(\xb)+rMk\). Since \(G\) is of type \((M,k)\), the heights in \(G\) are \(0,k,\ldots,(M-1)k\). Hence the heights in \(r\cb+G\) are exactly
\[
rMk,\ rMk+k,\ \ldots,\ rMk+(M-1)k.
\]
As \(r=0,1,\ldots,n-1\), these blocks form the full set of heights \(0,k,\ldots,(Mn-1)k\). Thus \(G'\) is of type \((Mn,k)\).
\end{proof}

To prove the converse direction, we need a simple calculation describing how the
sum of the fractional parts changes under a uniform shift.
Here and in what follows, for a real number \(x\), we write \(\{x\}\) for the fractional part of \(x\), that is, the unique element of \([0,1)\) congruent to \(x\) modulo \(1\).

\begin{Lemma}\label{lem:coordinate-sum-shift}
Let \(d,n\) be positive integers, and let
\[
\alpha=\frac{s}{nd}
\qquad
(0\le s<nd).
\]
Write \(s=qn+r\) with \(0\le r<n\). Then
\[
\sum_{j=0}^{d-1}
\left\{\frac{j}{d}+\alpha\right\}
-
\sum_{j=0}^{d-1}\frac{j}{d}
=
\frac{r}{n}.
\]
\end{Lemma}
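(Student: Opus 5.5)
The plan is a direct computation: count how many of the shifted values $\frac{j}{d}+\alpha$ wrap past $1$. Since $0\le \alpha<1$ and $0\le \frac{j}{d}<1$, each sum $\frac{j}{d}+\alpha$ lies in $[0,2)$. Hence
\[
\left\{\frac{j}{d}+\alpha\right\}=\frac{j}{d}+\alpha-\epsilon_j,
\]
where $\epsilon_j=1$ if $\frac{j}{d}+\alpha\ge 1$ and $\epsilon_j=0$ otherwise. Summing over $j$, the left-hand side of the lemma equals $d\alpha-\#\{j:\epsilon_j=1\}$.

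Next I compute $d\alpha$ and the number of wraps. First, $d\alpha=\frac{s}{n}=q+\frac{r}{n}$. The condition $\frac{j}{d}+\alpha\ge1$ is equivalent to $j\ge d-d\alpha=d-q-\frac rn$. Since $j$ is an integer and $0\le \frac rn<1$, this is equivalent to $j\ge d-q$ when $r=0$. When $r>0$, it is equivalent to $j\ge d-q-\frac rn$, i.e. $j\ge d-q$ again, because the ceiling of $d-q-\frac rn$ is $d-q$.

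It remains to check that $0\le q\le d-1$, so that the range $d-q\le j\le d-1$ lies inside $\{0,\dots,d-1\}$. This holds because $s<nd$ forces $q<d$. The range therefore contains exactly $q$ integers, so the number of wraps is $q$. Subtracting gives $q+\frac rn-q=\frac rn$, as claimed.

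No step is a real obstacle. The only point needing care is the ceiling computation, which must treat the boundary case $r=0$ (where $\frac{j}{d}+\alpha=1$ exactly, for $j=d-q$) correctly as a wrap. This is why I use $\ge$ in the definition of $\epsilon_j$.
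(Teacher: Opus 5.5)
Your proof is correct, and it takes a slightly different route from the paper's. The two arguments differ in how they handle the wrap-around modulo $1$.

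\textbf{The paper's route.} It splits $\alpha=\frac{q}{d}+\frac{r}{nd}$. Adding $\frac{q}{d}$ only permutes the multiset $\{0,\frac1d,\ldots,\frac{d-1}{d}\}$ modulo $1$, so it contributes nothing to the sum. The remaining shift satisfies $\frac{r}{nd}<\frac1d$, so it causes no wrap at all and adds exactly $d\cdot\frac{r}{nd}=\frac rn$.

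\textbf{Your route.} You keep $\alpha$ whole and count carries directly. The sum gains $d\alpha=q+\frac rn$ and loses $1$ for each index with $\frac jd+\alpha\ge 1$. Your ceiling computation shows these are exactly the $q$ indices $d-q\le j\le d-1$.

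\textbf{What each buys.} The paper's decomposition avoids the ceiling computation and the $r=0$ boundary case entirely. Its permutation-of-residues step is also the same trick the paper reuses in the proof of Lemma~\ref{lem:converse-one-step}, where the shift by $(n-1)c_i$ is reduced to a shift by $-\frac{s_i}{nd_i}$ before this lemma is applied. Your carry count is more mechanical, but it extends verbatim to any $\alpha\in[0,1)$: the number of wraps is $\lfloor d\alpha\rfloor$, so the change equals $\{d\alpha\}$. The lemma is then the special case $d\alpha=\frac sn$.
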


\begin{proof}
Since
\[
\alpha=\frac{s}{nd}=\frac{qn+r}{nd}=\frac{q}{d}+\frac{r}{nd},
\]
we have
\[
\left\{\frac{j}{d}+\alpha\right\}
=
\left\{\frac{j+q}{d}+\frac{r}{nd}\right\}.
\]
As \(j\) runs through \(0,1,\ldots,d-1\), the residue class \(j+q\) modulo \(d\) also runs through all residue classes modulo \(d\). Hence adding \(q/d\) only permutes the set \(\{0,\frac1d,\ldots,\frac{d-1}{d}\}\) modulo \(1\). Therefore it is enough to compute the change caused by adding \(r/(nd)\).

Since \(0\le r/(nd)<1/d\), we have \(\frac{j}{d}+\frac{r}{nd}<1\) for all \(j=0,\ldots,d-1\). Thus no wrap-around occurs, and hence
\[\sum_{j=0}^{d-1}
\left\{\frac{j}{d}+\alpha\right\}
-
\sum_{j=0}^{d-1}\frac{j}{d}=
\sum_{j=0}^{d-1}
\left\{\frac{j}{d}+\frac{r}{nd}\right\}
-
\sum_{j=0}^{d-1}\frac{j}{d}
=
d\cdot\frac{r}{nd}
=
\frac{r}{n}.
\]
\end{proof}

We now prove that every extension of the above form arises from the
construction in Lemma~\ref{lem:construction-from-h}.

\begin{Lemma}\label{lem:converse-one-step}
Let \(G\subset [0,1)^N\) be of type \((M,k)\) and have no zero coordinate. Embed \(G\) into some \([0,1)^{N'}\) by adding zero coordinates, let \(\cb\in [0,1)^{N'}\), and set \(G':=G+\langle \cb\rangle\subset [0,1)^{N'}\). Assume that
\[
G'/G\cong C_n,
\qquad
G' \text{ is of type }(Mn,k),
\]
that the heights in the coset \(r\cb+G\) are
\[
rMk,\ rMk+k,\ \ldots,\ rMk+(M-1)k
\]
for each \(r=0,1,\ldots,n-1\), and that \(\cb\) is the element of minimum height in \(\cb+G\). Then \(\hb:=n\cb\) belongs to \(G\) and satisfies
\[
h_i\in\left\{0,\frac1{d_i(G)}\right\}
\qquad
(i=1,\ldots,N).
\]
Moreover, after deleting the coordinates on which every element of \(G'\) is zero, the resulting subgroup is obtained from \(\hb\) by the construction of Lemma~\ref{lem:construction-from-h}.
\end{Lemma}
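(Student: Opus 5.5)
The plan is to show that adding multiples of \(\cb\) to elements of \(G\) never produces a carry, and then to read off the coordinates of \(\cb\) from that. The no-carry statement will come from comparing height sums over cosets.

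First, since \(G'/G\cong C_n\) and the image of \(\cb\) generates \(G'/G\), we get \(\hb:=n\cb\in G\). Next we pin down \(\hei(\cb)\). By hypothesis the heights in \(\cb+G\) are \(Mk,\ldots,(2M-1)k\), and \(\cb\) has minimum height there, so \(\hei(\cb)=Mk\).

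For \(\xb,\yb\in[0,1)^{N'}\) we have \(\hei(\xb+\yb)=\hei(\xb)+\hei(\yb)-(\text{number of coordinates with a carry})\). We apply this to the bijection \((r-1)\cb+G\to r\cb+G\), \(\yb\mapsto \yb+\cb\), for \(1\le r\le n-1\). By hypothesis,
\[
\sum_{\zb\in r\cb+G}\hei(\zb)=rM^2k+k\tfrac{M(M-1)}2=\sum_{\yb\in(r-1)\cb+G}\hei(\yb)+M\hei(\cb).
\]
Hence the total number of carries over all \(\yb\) is \(0\). Since each carry count is nonnegative, adding \(\cb\) to any element of \((r-1)\cb+G\) produces no carry. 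Iterating from \(\xb\in G\) up to \(r=n-1\) gives, coordinatewise and without reduction mod \(1\),
\[
x_i+(n-1)c_i<1\qquad\text{for all }\xb\in G\text{ and all }i.
\]
I expect this no-carry step to be the main point. Everything after it is bookkeeping.

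For an old coordinate \(i\), note that \(d_i:=d_i(G)\ge 2\) because \(G\) has no zero coordinate. By Lemma~\ref{lem:coordinate-values} we may choose \(\xb\in G\) with \(x_i=(d_i-1)/d_i\). This gives \((n-1)c_i<1/d_i\), so \(nc_i<\frac{n}{(n-1)d_i}\le 2/d_i<1\) (using \(n\ge2\) and \(d_i\ge2\)). In particular \(h_i=nc_i\) holds exactly, with no reduction mod \(1\). Since \(\hb\in G\), Lemma~\ref{lem:coordinate-values} says \(h_i\) is a multiple of \(1/d_i\), and the bound \(h_i<2/d_i\) then forces \(h_i\in\{0,1/d_i\}\) and \(c_i=h_i/n\).

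For a new coordinate \(j\), every element of \(G\) has \(x_j=0\) and \(h_j=0\). So \(nc_j\equiv0\), i.e. \(c_j=a_j/n\) with \(0\le a_j<n\). The no-carry inequality with \(\xb=0\) gives \((n-1)a_j/n<1\), hence \(a_j\in\{0,1\}\). The coordinates with \(a_j=0\) are zero on all of \(G'=G+\langle\cb\rangle\); we delete them, along with any other coordinates on which \(G'\) vanishes. Let \(L\) be the number of remaining new coordinates; each has \(c_j=1/n\). Then \(Mk=\hei(\cb)=\hei(\hb)/n+L/n\), so \(L=nMk-\hei(\hb)\). Thus after deletion, \(\cb\) is exactly the element constructed in Lemma~\ref{lem:construction-from-h} from \(\hb\), and \(G'=G+\langle\cb\rangle\) is the subgroup that lemma produces.
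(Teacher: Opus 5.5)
Your proof is correct, but it reaches the conclusion by a different mechanism than the paper.

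\textbf{The paper's route.} It writes $c_i=s_i/(nd_i)$ with $s_i=q_in+r_i$. It then uses Lemma~\ref{lem:coordinate-sum-shift} to compute, coordinate by coordinate, the change in total height from $G$ to $\cb+G$ and from $G$ to $(n-1)\cb+G$. The two resulting linear identities force first $q_i=0$ and then $s_i\in\{0,1\}$.

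\textbf{Your route.} You use $\hei(\xb+\yb)=\hei(\xb)+\hei(\yb)-(\text{number of carries})$ on the consecutive cosets $(r-1)\cb+G\to r\cb+G$. The hypothesized height blocks make the total height difference exactly $M\hei(\cb)$, so no carry ever occurs. Iterating gives $x_i+(n-1)c_i<1$ for all $\xb\in G$, and the coordinate values of $\cb$ follow by taking $x_i$ maximal.

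\textbf{Comparison.} Your argument is shorter and avoids Lemma~\ref{lem:coordinate-sum-shift} and the $s_i,q_i,r_i$ bookkeeping. It handles old and new coordinates with the same inequality. It also makes explicit that the converse is exactly the reverse of the no-carry property proved inside Lemma~\ref{lem:construction-from-h}. In exchange, the paper's argument only uses the block hypothesis for $r=1$ and $r=n-1$ (plus minimality of $\cb$). Your iteration genuinely needs it for every $r=1,\ldots,n-1$, which the statement does supply.

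\textbf{Minor slip.} In the chain $nc_i<\frac{n}{(n-1)d_i}\le 2/d_i<1$, the last inequality fails when $d_i=2$. It should read $\le 1$; the strict first inequality still yields $nc_i<1$, so nothing breaks.
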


\begin{proof}
Since \(G'/G\cong C_n\), we have \(n\cb\in G\). Put \(\hb:=n\cb\). For each coordinate \(i=1,\ldots,N'\), let \(d_i\) denote the order of the \(i\)-th coordinate projection of the embedded subgroup \(G\). Thus \(d_i=d_i(G)\) for \(i\le N\), while \(d_i=1\) for \(i>N\), since \(G\) is zero on those coordinates.

Since \(nc_i\) belongs to the image of the \(i\)-th coordinate projection of \(G\), we may write
\[
c_i=\frac{s_i}{nd_i}
\qquad
(0\le s_i<nd_i).
\]
Write \(s_i=q_in+r_i\) with \(0\le r_i<n\).

We compare the sum of heights of \(G\) and \(\cb+G\). For \(\xb=(x_1,\ldots,x_{N'})\in G\), one has
\[
\hei(\xb+\cb)=\sum_{i=1}^{N'}\{x_i+c_i\}.
\]
Hence 
\[
\sum_{\xb\in G}\hei(\xb+\cb)-\sum_{\xb\in G}\hei(\xb)
=
\sum_{i=1}^{N'}
\left(
\sum_{\xb\in G}\{x_i+c_i\}
-
\sum_{\xb\in G}x_i
\right).
\]
Fix \(i\). By Lemma~\ref{lem:coordinate-values}, each value
$0,\frac1{d_i},\ldots,\frac{d_i-1}{d_i}$
occurs as the \(i\)-th coordinate \(x_i\) of an element of \(G\) exactly \(M/d_i\) times. Therefore
\[
\sum_{\xb\in G}\{x_i+c_i\}-\sum_{\xb\in G}x_i
=
\frac{M}{d_i}
\left(
\sum_{j=0}^{d_i-1}
\left\{\frac{j}{d_i}+c_i\right\}
-
\sum_{j=0}^{d_i-1}\frac{j}{d_i}
\right).
\]
Since \(c_i=s_i/(nd_i)\) and \(s_i=q_in+r_i\), Lemma~\ref{lem:coordinate-sum-shift} gives
\[
\sum_{\xb\in G}\{x_i+c_i\}-\sum_{\xb\in G}x_i
=
\frac{M}{d_i}\frac{r_i}{n}
\qquad
(i=1,\ldots,N').
\]
By assumption, the heights in \(\cb+G\) are
\[
Mk,Mk+k,\ldots,Mk+(M-1)k,
\]
whereas the heights in \(G\) are
\[
0,k,\ldots,(M-1)k.
\]
Hence the difference of the two total height sums is
\[
\sum_{j=0}^{M-1}(Mk+jk)-\sum_{j=0}^{M-1}jk=M^2k.
\]
Therefore
\[
\sum_{i=1}^{N'} \frac{M}{d_i}\frac{r_i}{n}=M^2k.
\tag{1}
\]

On the other hand, since \(\cb\) is the minimum-height element of \(\cb+G\), we have \(\hei(\cb)=Mk\). Using \(c_i=s_i/(nd_i)\), we obtain
\[
\sum_{i=1}^{N'}\frac{M}{d_i}\frac{s_i}{n}
=
M\sum_{i=1}^{N'} c_i
=
M\hei(\cb)
=
M^2k.
\tag{2}
\]
Comparing (1) and (2), we obtain
\[
\sum_{i=1}^{N'} \frac{M}{d_i}\frac{s_i-r_i}{n}=0.
\]
Since \(s_i-r_i=q_in\ge0\), every summand is nonnegative. Hence every summand is zero, so \(q_i=0\) for every \(i\). Thus \(0\le s_i<n\) for all \(i\).

Next compare the coset \((n-1)\cb+G\) with \(G\). By assumption, its heights are
\[
(n-1)Mk,\ (n-1)Mk+k,\ \ldots,\ (n-1)Mk+(M-1)k,
\]
and therefore
\[
\sum_{\zb\in (n-1)\cb+G}\hei(\zb)-\sum_{\xb\in G}\hei(\xb)
=
(n-1)M^2k.
\tag{3}
\]
We now compute the left-hand side of \((3)\) by considering each coordinate separately.
For a fixed coordinate \(i\), the contribution to the total height difference is
\[
\sum_{\xb\in G}\{x_i+(n-1)c_i\}-\sum_{\xb\in G}x_i.
\]
If \(s_i=0\), then \(c_i=0\), so this contribution is \(0\). Suppose that
\(s_i>0\). Since \(0<s_i<n\), one has
\[
(n-1)c_i=\frac{(n-1)s_i}{nd_i}
=\frac{s_i}{d_i}-\frac{s_i}{nd_i}.
\]
Hence, for each \(j=0,1,\ldots,d_i-1\),
\[
\left\{\frac{j}{d_i}+(n-1)c_i\right\}
=
\left\{\frac{j+s_i}{d_i}-\frac{s_i}{nd_i}\right\}.
\]
As \(j\) runs through \(0,1,\ldots,d_i-1\), the residue class \(j+s_i\) modulo
\(d_i\) also runs through all residue classes modulo \(d_i\). Therefore
\[
\sum_{j=0}^{d_i-1}\left\{\frac{j}{d_i}+(n-1)c_i\right\}
=
\sum_{j=0}^{d_i-1}\left\{\frac{j}{d_i}-\frac{s_i}{nd_i}\right\}.
\]
Since
\[
-\frac{s_i}{nd_i}\equiv \frac{nd_i-s_i}{nd_i}\pmod 1
\]
and
\[
nd_i-s_i=(d_i-1)n+(n-s_i),
\]
Lemma~\ref{lem:coordinate-sum-shift} yields
\[
\sum_{j=0}^{d_i-1}\left\{\frac{j}{d_i}+(n-1)c_i\right\}
-
\sum_{j=0}^{d_i-1}\frac{j}{d_i}
=
\frac{n-s_i}{n}
=
1-\frac{s_i}{n}.
\]
Hence the contribution of the \(i\)-th coordinate is
\[
\frac{M}{d_i}\left(1-\frac{s_i}{n}\right).
\]
Summing over all coordinates, we obtain
\[
\sum_{s_i>0}\frac{M}{d_i}\left(1-\frac{s_i}{n}\right)
=
(n-1)M^2k.
\tag{4}
\]
Using (2), this becomes
\[
\sum_{s_i>0}\frac{M}{d_i}=nM^2k.
\tag{5}
\]
Multiplying (2) by \(n\), we also have
\[
\sum_{i=1}^{N'}\frac{M}{d_i}s_i=nM^2k.
\tag{6}
\]
Comparing (5) and (6), we obtain
\[
\sum_{s_i>0}\frac{M}{d_i}(s_i-1)=0.
\]
Again every summand is nonnegative, so \(s_i\in\{0,1\}\) for every \(i\).

For \(i=1,\ldots,N\), since \(\hb=n\cb\) in \([0,1)^{N'}\), we have
\[
h_i\equiv nc_i\equiv \frac{s_i}{d_i}\pmod 1.
\]
Since \(s_i\in\{0,1\}\), one has \(\frac{s_i}{d_i}\in [0,1)\). Hence
\[
h_i=\frac{s_i}{d_i}.
\]
Thus
\[
h_i\in\left\{0,\frac1{d_i}\right\}
=
\left\{0,\frac1{d_i(G)}\right\}.
\]
For \(i>N\), we have \(d_i=1\), so \(c_i=s_i/n\in\{0,1/n\}\). Hence, on every extra coordinate, the value of \(\cb\) is either \(0\) or \(1/n\). Therefore, after deleting the coordinates on which every element of \(G'\) is zero, the remaining extra coordinates are exactly new \(1/n\)-coordinates. It follows that the resulting subgroup is obtained from \(\hb\) by the construction of Lemma~\ref{lem:construction-from-h}.
\end{proof}

Combining the construction and the converse, we obtain the following
classification of one-step extensions.
\begin{Theorem}\label{thm:one-step-extension}
Let \(G\subset [0,1)^N\) be of type \((M,k)\) and have no zero coordinate, and
let \(n\ge2\). Then Lemmas~\ref{lem:construction-from-h} and
\ref{lem:converse-one-step} yield a bijection between the following data:
\begin{enumerate}[(i)]
\item elements \(\hb\in G\) satisfying
\[
h_i\in\left\{0,\frac1{d_i(G)}\right\}
\qquad
(i=1,\ldots,N);
\]
\item extensions obtained by embedding \(G\) into some \([0,1)^{N'}\) by adding zero coordinates and taking
\[
G'=G+\langle \cb\rangle \subset [0,1)^{N'}
\]
such that
$G'/G\cong C_n$,
$G'$ is of type $(Mn,k)$ and
the heights in the coset \(r\cb+G\) are
\[
rMk,\ rMk+k,\ \ldots,\ rMk+(M-1)k
\]
for each \(r=0,1,\ldots,n-1\), and \(\cb\) is the minimum-height element of \(\cb+G\), where extensions are identified after deleting the coordinates on which every element of \(G'\) is zero.
\end{enumerate}
\end{Theorem}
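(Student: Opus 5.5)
We will define two maps and check that they are mutually inverse. The forward map $\Phi$ sends an element $\hb$ as in (i) to the extension $G'=G+\langle\cb\rangle$ of Lemma~\ref{lem:construction-from-h}, with $L=nMk-\hei(\hb)$ new coordinates. The backward map $\Psi$ sends an extension $G'=G+\langle \cb\rangle$ as in (ii) to $\hb:=n\cb$.

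\emph{Well-definedness.} Lemma~\ref{lem:construction-from-h} shows that $\Phi(\hb)$ satisfies every condition in (ii) except the minimality of $\cb$ in its coset. For that condition, the lemma states that the heights in $\cb+G$ are $Mk,\ldots,Mk+(M-1)k$. It also gives $\hei(\cb)=Mk$, and this is the smallest of these values. Hence $\cb$ is the minimum-height element of $\cb+G$. Moreover, no coordinate of $\Phi(\hb)$ is identically zero: the old coordinates are nonzero already on $G$, and each new coordinate of $\cb$ equals $1/n$. So $\Phi(\hb)$ is already in reduced form. For $\Psi$, Lemma~\ref{lem:converse-one-step} shows that $n\cb\in G$ and $h_i\in\{0,1/d_i(G)\}$.

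The point needing care is that $\Psi$ is well defined on identification classes. The extension in (ii) is given as a group $G'$ together with the element $\cb$, and the identification only deletes identically zero coordinates. Deleting such coordinates does not change the old coordinates of $\cb$. Therefore $n\cb$ is unchanged, and $\Psi$ depends only on the class.

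\emph{$\Psi\circ\Phi=\mathrm{id}$.} This is immediate, since Lemma~\ref{lem:construction-from-h} gives $n\cb=\hb$.

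\emph{$\Phi\circ\Psi=\mathrm{id}$.} We expect this to be the main step. By the last assertion of Lemma~\ref{lem:converse-one-step}, after deleting zero coordinates the given extension equals the construction applied to $\hb=n\cb$. To make this precise, we read off the proof of that lemma:
\begin{itemize}
\item on the old coordinates, $c_i=s_i/(nd_i)=h_i/n$;
\item on each surviving extra coordinate, $c_i=1/n$.
\end{itemize}
It remains to show that the number of surviving extra coordinates is exactly $L=nMk-\hei(\hb)$. We obtain this from $\hei(\cb)=Mk$. This value holds because $\cb$ is the minimum of a coset whose heights start at $Mk$. Then
\[
Mk=\hei(\cb)=\frac{\hei(\hb)}{n}+\frac{\#\{\text{extra coordinates with } c_i=1/n\}}{n},
\]
which gives the required count. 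Consequently the reduced extension coincides with $\Phi(\hb)$, up to the ordering of the new coordinates. That ordering is immaterial, because all new coordinates are identical copies of $1/n$ and play symmetric roles.

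Together these steps show that $\Phi$ and $\Psi$ are mutually inverse, which gives the bijection between (i) and (ii).
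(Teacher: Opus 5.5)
Your proposal is correct and follows the paper's route: the paper obtains the theorem simply by pairing the construction of Lemma~\ref{lem:construction-from-h} with the converse Lemma~\ref{lem:converse-one-step}, and your verification that $\Psi\circ\Phi$ and $\Phi\circ\Psi$ are identities spells out what that one-line justification leaves unsaid. Your count of the surviving extra coordinates via $\hei(\cb)=Mk$ explains why the reduced extension has exactly $L=nMk-\hei(\hb)$ new coordinates, which the paper leaves implicit at the end of the proof of Lemma~\ref{lem:converse-one-step}; also, reading the data in (ii) as the pair $(G',\cb)$ costs nothing, since $\cb$ is forced to be the unique element of height $Mk$ in $G'$.
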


\section{Recursive construction}

In this section, we introduce a recursive construction of finite subgroups
together with associated finite subsets \(\Lc(G)\subset G\). The role of
\(\Lc(G)\) is to record the elements that can be used in the next step of the
construction. More precisely, we will show that, for every pair
\((G,\Lc(G))\) obtained in this way, the elements \(\hb\in G\) satisfying
\[
h_i\in\left\{0,\frac1{d_i(G)}\right\}
\qquad
(i=1,\ldots,N)
\]
are precisely the subset sums of elements of \(\Lc(G)\). By
Theorem~\ref{thm:one-step-extension}, these are exactly the elements that give
rise to one-step extensions of \(G\).

We now define recursively the pairs
$(G,\Lc(G))$.
First, for an integer \(m\ge2\), let
\[
\cb=
\left(\frac1m,\ldots,\frac1m\right)\in [0,1)^{mk}
\]
and set
$G=\langle \cb\rangle$.
Then \(G\) is of type \((m,k)\), and we define
$\Lc(G)=\{\cb\}$.

Now suppose that a pair \((G,\Lc(G))\) has already been constructed, where
\(G\subset [0,1)^N\) is of type \((M,k)\). Let \(J\subseteq \Lc(G)\) be a subset
for which the element
$\hb:=\sum_{\ub\in J}\ub\in G$
satisfies
\[
h_i\in\left\{0,\frac1{d_i(G)}\right\}
\qquad
(i=1,\ldots,N).
\]
Let \(n\ge2\), and apply Lemma~\ref{lem:construction-from-h} to \(\hb\). Thus
we obtain an extension
$G'=G+\langle \cb\rangle$
such that \(n\cb=\hb\). We then define
\[
\Lc(G')=(\Lc(G)\setminus J)\cup\{\cb\}.
\]

\begin{Lemma}\label{lem:L-supports}
Let \((G,\Lc(G))\) be one of the pairs constructed recursively above. Then the
supports of the elements of \(\Lc(G)\) are pairwise disjoint. Moreover, if
\(\ub\in\Lc(G)\) and \(i\in \supp(\ub)\), then
\[
u_i=\frac1{d_i(G)}.
\]
\end{Lemma}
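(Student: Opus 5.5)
We argue by induction on the number of steps in the recursive construction of \((G,\Lc(G))\).

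\emph{Base case.} Here \(G=\langle\cb\rangle\) with \(\cb=(1/m,\ldots,1/m)\in[0,1)^{mk}\) and \(\Lc(G)=\{\cb\}\). There is only one element, so disjointness is vacuous. Every coordinate projection has image \(\{0,1/m,\ldots,(m-1)/m\}\), so \(d_i(G)=m\) and \(c_i=1/d_i(G)\) for all \(i\).

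\emph{Inductive step.} Assume the statement holds for \((G,\Lc(G))\) with \(G\subset[0,1)^N\) of type \((M,k)\). Let \(J\subseteq\Lc(G)\) with \(\hb=\sum_{\ub\in J}\ub\) satisfying the condition, and let \(G'=G+\langle\cb\rangle\subset[0,1)^{N+L}\) be given by Lemma~\ref{lem:construction-from-h}. Set \(\Lc(G')=(\Lc(G)\setminus J)\cup\{\cb\}\), where elements of \(G\) are embedded with zero new coordinates.

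First we compute the new coordinate orders \(d_i(G')\).
\begin{itemize}
\item For a new coordinate \(i>N\), the projection of \(G\) is zero and \(c_i=1/n\). Hence the image is generated by \(1/n\), giving \(d_i(G')=n\).
\item For an old coordinate \(i\le N\), the image of \(\pi_i\) on \(G'\) is generated by \(\{0,1/d_i(G),\ldots\}\) together with \(c_i=h_i/n\). If \(h_i=0\), then \(d_i(G')=d_i(G)\). If \(h_i=1/d_i(G)\), then \(c_i=1/(nd_i(G))\) generates a subgroup containing \(1/d_i(G)\), so \(d_i(G')=nd_i(G)\).
\end{itemize}

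Next we record the supports. Since the supports of elements of \(J\) are pairwise disjoint with values \(1/d_i(G)\), the sum \(\hb\) involves no carries. So \(\supp(\hb)=\bigsqcup_{\ub\in J}\supp(\ub)\), with \(h_i=1/d_i(G)\) there and \(0\) elsewhere. It follows that \(\supp(\cb)=\supp(\hb)\cup\{N+1,\ldots,N+L\}\).

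\emph{Disjointness.} Elements of \(\Lc(G)\setminus J\) have pairwise disjoint supports by induction. They are also disjoint from \(\supp(\hb)\), because \(\supp(\hb)\) is the union of the supports of \(J\). Finally, they vanish on the new coordinates. Hence all supports in \(\Lc(G')\) are pairwise disjoint.

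\emph{Values.} For \(\cb\) there are two cases.
\begin{itemize}
\item On \(\supp(\hb)\), we have \(c_i=1/(nd_i(G))=1/d_i(G')\).
\item On new coordinates, \(c_i=1/n=1/d_i(G')\).
\end{itemize}
For \(\ub\in\Lc(G)\setminus J\) and \(i\in\supp(\ub)\), we have \(i\notin\supp(\hb)\), so \(h_i=0\) and \(d_i(G')=d_i(G)\). Thus \(u_i=1/d_i(G)=1/d_i(G')\).

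The only point requiring care is the computation of \(d_i(G')\) on old coordinates. This relies on \(\im\pi_i\) being cyclic, generated by \(1/d_i(G)\) and \(c_i\), and on the no-carry observation that makes \(\supp(\hb)\) the disjoint union of the supports of \(J\).
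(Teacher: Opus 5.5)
Your proof is correct and follows essentially the same induction as the paper. The disjointness of the supports in \(\Lc(G)\) makes \(\supp(\hb)\) the disjoint union of the supports of the elements of \(J\), and the coordinate orders update as \(d_i(G')=n\) on new coordinates, \(d_i(G')=n\,d_i(G)\) on \(\supp(\hb)\), and \(d_i(G')=d_i(G)\) elsewhere, which yields both assertions.
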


\begin{proof}
We prove both assertions by induction on the recursive construction.

We first consider an initial pair arising from the first step of the
construction. Thus
\[
G=\langle \cb\rangle,
\qquad
\cb=
\left(\frac1m,\ldots,\frac1m\right)\in [0,1)^{mk},
\qquad
\Lc(G)=\{\cb\}
\]
for some \(m\ge2\). Since \(\Lc(G)\) consists of a single element, the support
statement is clear. Moreover, for each coordinate \(i\), the \(i\)-th coordinate
projection of \(G\) has image
\[
\left\{0,\frac1m,\ldots,\frac{m-1}{m}\right\},
\]
so \(d_i(G)=m\). Since every coordinate of \(\cb\) is equal to \(1/m\), the
second assertion holds.

Now assume that the assertions hold for a pair \((G,\Lc(G))\), and let
\((G',\Lc(G'))\) be obtained from it as above. Thus
\[
G'=G+\langle \cb\rangle,
\qquad
n\cb=\hb,
\qquad
\Lc(G')=(\Lc(G)\setminus J)\cup\{\cb\}.
\]

By the induction hypothesis, the supports of the elements of \(\Lc(G)\) are
pairwise disjoint. Hence the support of
$\hb=\sum_{\ub\in J}\ub$
is the disjoint union of the supports of the elements in \(J\). By the
construction in Lemma~\ref{lem:construction-from-h}, on the old coordinates one
has \(c_i=h_i/n\), while on each newly added coordinate one has \(c_i=1/n\).
Thus \(\supp(\cb)\) consists of the support of \(\hb\), together with the newly
added coordinates. In particular, \(\supp(\cb)\) is disjoint from the supports
of the elements in \(\Lc(G)\setminus J\). Therefore the supports of the elements
of \(\Lc(G')\) are pairwise disjoint.

We next prove the assertion about coordinate values. Let
\(\ub\in\Lc(G')\) and \(i\in\supp(\ub)\).
First suppose that \(\ub\in\Lc(G)\setminus J\). Since \(\ub\notin J\) and the
supports of the elements of \(\Lc(G)\) are pairwise disjoint, the \(i\)-th
coordinate of \(\hb\) is zero. Hence \(c_i=h_i/n=0\). Therefore adjoining
\(\cb\) does not change the image of the \(i\)-th coordinate projection, so
\(d_i(G')=d_i(G)\). By the induction hypothesis, \(u_i=1/d_i(G)\), and hence
\[
u_i=\frac1{d_i(G')}.
\]

It remains to consider the case \(\ub=\cb\). Let \(i\in\supp(\cb)\). If \(i\)
is a newly added coordinate, then \(c_i=1/n\). Since every element of \(G\) is
zero on that coordinate, the \(i\)-th coordinate projection of \(G'\) is
generated by \(1/n\). Thus \(d_i(G')=n\), and hence
\[
c_i=\frac1{d_i(G')}.
\]

Now suppose that \(i\) is an old coordinate. Since \(i\in\supp(\cb)\), one has
\(h_i\neq0\), and hence \(i\in\supp(\hb)\). By the disjointness of the supports
in \(\Lc(G)\), there is a unique element \(\ub_0\in J\) such that
\(i\in\supp(\ub_0)\). By the induction hypothesis,
\[
(u_0)_i=\frac1{d_i(G)}.
\]
Since the supports of the elements in \(J\) are pairwise disjoint, this implies
\[
h_i=\frac1{d_i(G)}.
\]
Therefore
\[
c_i=\frac{h_i}{n}=\frac1{n\,d_i(G)}.
\]
The image of the \(i\)-th coordinate projection of \(G\) is generated by
\(1/d_i(G)\), while after adjoining \(\cb\), the image of the \(i\)-th
coordinate projection of \(G'\) is generated by \(1/d_i(G)\) and
\(1/(n\,d_i(G))\). Hence it is generated by \(1/(n\,d_i(G))\), so
$d_i(G')=n\,d_i(G)$.
Thus
\[
c_i=\frac1{d_i(G')}.
\]
This proves the lemma.
\end{proof}

We now use this structural property of \(\Lc(G)\) to identify the elements of
\(G\) that satisfy the coordinate condition appearing in the one-step extension
theorem.

\begin{Theorem}\label{thm:subset-sums}
Let \((G,\Lc(G))\) be one of the pairs constructed recursively above. Then the
elements \(\hb\in G\) satisfying
\[
h_i\in\left\{0,\frac1{d_i(G)}\right\}
\qquad
(i=1,\ldots,N)
\]
are precisely the subset sums
\[
\sum_{\ub\in J}\ub,
\qquad
J\subseteq \Lc(G).
\]
Moreover, such a subset \(J\) is unique.
\end{Theorem}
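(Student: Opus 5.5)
The plan is to prove the two inclusions separately. The easy direction (subset sums satisfy the condition, with $J$ unique) will come directly from Lemma~\ref{lem:L-supports}. The converse will be proved by induction along the recursive construction, following the same steps that build $(G,\Lc(G))$.

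\emph{Subset sums satisfy the condition, and $J$ is unique.} Let $J\subseteq\Lc(G)$ and $\hb=\sum_{\ub\in J}\ub$. By Lemma~\ref{lem:L-supports}, the supports of the elements of $\Lc(G)$ are pairwise disjoint, so no coordinate receives contributions from two summands. Hence $h_i=u_i=1/d_i(G)$ if $i\in\supp(\ub)$ for some (unique) $\ub\in J$, and $h_i=0$ otherwise. Thus $\hb$ satisfies the condition and $\supp(\hb)=\bigsqcup_{\ub\in J}\supp(\ub)$.

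For uniqueness, I will note that every element of $\Lc(G)$ has nonempty support. The initial $\cb$ is nonzero everywhere. Each later $\cb$ equals $1/n$ on the newly added coordinates, and there are $L>0$ of these by Lemma~\ref{lem:construction-from-h}. Then $J=\{\ub\in\Lc(G):\supp(\ub)\subseteq\supp(\hb)\}$ is determined by $\hb$.

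\emph{Every $\hb$ satisfying the condition is a subset sum.} I will induct on the construction.

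\textbf{Base case.} $G=\langle\cb\rangle$ with $\cb=(1/m,\dots,1/m)$. Its elements are $r\cb$ with $0\le r<m$, whose coordinates all equal $r/m$. Since $d_i(G)=m$, the condition forces $r\in\{0,1\}$, i.e.\ $\hb\in\{\mathbf 0,\cb\}$.

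\textbf{Inductive step.} Let $G'=G+\langle\cb\rangle$ with $n\cb=\hb_J=\sum_{\ub\in J}\ub$, and let $\gb\in G'$ satisfy $g_i\in\{0,1/d_i(G')\}$. Since $G'/G\cong C_n$, I write $\gb=\xb+r\cb$ (sum mod $1$) with $\xb\in G$ and $0\le r<n$. The coordinates split into three kinds, using the values of $d_i(G')$ computed in the proof of Lemma~\ref{lem:L-supports}.
\begin{itemize}
\item On a new coordinate, $x_i=0$ and $d_i(G')=n$, so $g_i=r/n\in\{0,1/n\}$. This forces $r\in\{0,1\}$.
\item On a coordinate $i\in\supp(\hb_J)$, we have $c_i=1/(nd_i(G))$ and $d_i(G')=nd_i(G)$. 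Therefore $x_i\equiv g_i-rc_i$ lies in $\{0,\pm 1/(nd_i(G))\}$ modulo $1$. But $x_i$ is a multiple of $1/d_i(G)$ and $n\ge2$, so $x_i=0$.
\item On an old coordinate outside $\supp(\hb_J)$, we have $c_i=0$ and $d_i(G')=d_i(G)$. So $x_i=g_i\in\{0,1/d_i(G)\}$.
\end{itemize}

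It follows that $\xb$ satisfies the condition for $G$. By the induction hypothesis, $\xb=\sum_{\ub\in K}\ub$ for some $K\subseteq\Lc(G)$. Moreover $\xb$ vanishes on $\supp(\hb_J)$, which is the union of the nonempty supports of the elements of $J$. Hence $K\cap J=\emptyset$. Therefore
\[
\gb=\sum_{\ub\in K}\ub+r\cb,
\qquad r\in\{0,1\},\quad K\subseteq\Lc(G)\setminus J,
\]
which is a subset sum of $\Lc(G')=(\Lc(G)\setminus J)\cup\{\cb\}$.

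\emph{Main obstacle.} I expect the main difficulty to be the bookkeeping in the inductive step: the sum $\xb+r\cb$ is taken modulo $1$, so one must rule out wrap-around on $\supp(\hb_J)$. The divisibility argument above ($x_i$ is a multiple of $1/d_i(G)$, whereas $\pm1/(nd_i(G))$ is not, since $n\ge2$) is what I would rely on for this. One must also track $d_i(G')$ on each kind of coordinate, which the proof of Lemma~\ref{lem:L-supports} already provides.
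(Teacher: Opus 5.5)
Your proposal is correct and follows essentially the paper's route: induction along the recursive construction, writing an admissible element of $G'$ as $\xb+r\cb$, forcing $r\in\{0,1\}$ on the new coordinates, showing $\xb$ vanishes on $\supp(\hb_J)$ by comparing multiples of $1/d_i(G)$ with $\pm 1/(n d_i(G))$, and then using the induction hypothesis together with the disjoint supports from Lemma~\ref{lem:L-supports}. Your explicit check that every element of $\Lc(G)$ is nonzero (via the $L>0$ new coordinates) fills in a point that the paper's uniqueness argument leaves implicit.
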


\begin{proof}
We prove the assertion by induction on the recursive construction.

We first consider an initial pair. Thus \(G=\langle \cb\rangle\), where
$\cb=\left(\frac1m,\ldots,\frac1m\right)\in [0,1)^{mk}$
for some \(m\ge2\), and \(\Lc(G)=\{\cb\}\). Let \(\hb\in G\) satisfy
\[
h_i\in\left\{0,\frac1{d_i(G)}\right\}
\qquad
(i=1,\ldots,mk).
\]
Since \(G=\langle \cb\rangle\) is cyclic of order \(m\), there exists
\(t\in\{0,1,\ldots,m-1\}\) such that
$\hb=t\cb$.
Moreover, \(d_i(G)=m\) for every coordinate \(i\), and each coordinate of
\(\hb=t\cb\) is equal to \(t/m\). Therefore the coordinate condition becomes
\[
\frac{t}{m}\in\left\{0,\frac1m\right\},
\]
which forces \(t=0\) or \(t=1\). Hence \(\hb=0\) or \(\hb=\cb\). These are
precisely the subset sums of \(\Lc(G)=\{\cb\}\), and the representing subset is
clearly unique.

Now assume that the assertion holds for a pair \((G,\Lc(G))\), and let
\((G',\Lc(G'))\) be obtained from it as above. Thus
$G'=G+\langle \cb\rangle,
n\cb=\hb$ and 
$\Lc(G')=(\Lc(G)\setminus J)\cup\{\cb\}$.
We prove the assertion for \(G'\).
First, any subset sum of elements of \(\Lc(G')\) satisfies the coordinate
condition for \(G'\). Indeed, by Lemma~\ref{lem:L-supports}, the supports of the
elements of \(\Lc(G')\) are pairwise disjoint, and on each coordinate in the
support of an element \(\ub\in\Lc(G')\), the element \(\ub\) has value
\(1/d_i(G')\). Therefore a subset sum of elements of \(\Lc(G')\) has, in each
coordinate \(i\), either value \(0\) or value \(1/d_i(G')\).

Conversely, let \(\hb'\in G'\) satisfy
\[
h_i'\in\left\{0,\frac1{d_i(G')}\right\}
\qquad
(i=1,\ldots,N'),
\]
where \(N'\) is the number of coordinates of \(G'\). We show that \(\hb'\) is a
subset sum of elements of \(\Lc(G')\).

Since \(G'/G\cong C_n\), every element of \(G'\) can be written uniquely as
$\hb'=\gb+r\cb$
with \(\gb\in G\) and \(0\le r<n\). On each newly added coordinate, \(\gb\) has
value \(0\), while \(\cb\) has value \(1/n\). Since on such a coordinate
\(d_i(G')=n\), the coordinate condition on \(\hb'\) implies
\[
\frac rn\in\left\{0,\frac1n\right\}.
\]
Thus \(r=0\) or \(r=1\).

We claim that, in both cases, \(\gb\) is a subset sum of elements of
\(\Lc(G)\setminus J\). Let \(i\) be a coordinate in the support of an element of
\(J\). By Lemma~\ref{lem:L-supports}, the supports of the elements of
\(\Lc(G)\) are pairwise disjoint, and each element of \(\Lc(G)\) has value
\(1/d_i(G)\) on its support. Since
$\hb=\sum_{\ub\in J}\ub$,
it follows that
$h_i=1/{d_i(G)}$.
Hence \(c_i=h_i/n=1/(n\,d_i(G))\), and the image of the \(i\)-th coordinate
projection of \(G'\) is generated by \(1/d_i(G)\) and \(1/(n\,d_i(G))\). Thus
$d_i(G')=n\,d_i(G)$.

If \(r=0\), then \(\hb'=\gb\). Since \(\gb\in G\), the \(i\)-th coordinate of
\(\gb\) belongs to
\[
\left\{0,\frac1{d_i(G)},\ldots,\frac{d_i(G)-1}{d_i(G)}\right\}.
\]
On the other hand, the coordinate condition for \(G'\) gives
\[
g_i\in\left\{0,\frac1{n\,d_i(G)}\right\}.
\]
The intersection of these two sets is \(\{0\}\), so \(g_i=0\).
If \(r=1\), then \(\hb'=\gb+\cb\). Since \(c_i=1/d_i(G')\) and \(\hb'\)
satisfies the coordinate condition for \(G'\), the same intersection argument
again gives \(g_i=0\). Therefore, in both cases, \(\gb\) is zero on the supports
of all elements in \(J\).

Now let \(i\) be an old coordinate not contained in the support of any element
of \(J\). Then \(h_i=0\), and hence \(c_i=0\). Therefore \(d_i(G')=d_i(G)\),
and the coordinate condition for \(\hb'=\gb+r\cb\) is exactly the coordinate
condition for \(\gb\) on this coordinate. Combining this with the conclusion on
the supports of \(J\), we see that \(\gb\) satisfies
\[
g_i\in\left\{0,\frac1{d_i(G)}\right\}
\qquad
(i=1,\ldots,N).
\]
By the induction hypothesis, \(\gb\) is a subset sum of elements of \(\Lc(G)\).
Since \(\gb\) is zero on the supports of all elements in \(J\), and the supports
of the elements of \(\Lc(G)\) are pairwise disjoint, no element of \(J\) occurs
in this subset sum. Thus \(\gb\) is a subset sum of elements of
\(\Lc(G)\setminus J\).

If \(r=0\), then \(\hb'=\gb\), so \(\hb'\) is a subset sum of elements of
\(\Lc(G')\). If \(r=1\), then \(\hb'=\gb+\cb\), so \(\hb'\) is again a subset
sum of elements of
$(\Lc(G)\setminus J)\cup\{\cb\}=\Lc(G')$.

Finally, uniqueness follows from Lemma~\ref{lem:L-supports}. Since the supports
of the elements of \(\Lc(G')\) are pairwise disjoint and each element is nonzero
on its support, a subset sum determines exactly which elements of \(\Lc(G')\)
occur in it.
\end{proof}

As an immediate consequence, for every recursively constructed pair
\((G,\Lc(G))\), every subset \(J\subseteq \Lc(G)\) gives rise to an element
$\hb=\sum_{\ub\in J}\ub$
satisfying
\[
h_i\in\left\{0,\frac1{d_i(G)}\right\}
\qquad
(i=1,\ldots,N).
\]
Hence, once a pair \((G,\Lc(G))\) has been constructed, every subset
\(J\subseteq \Lc(G)\) may be used in the next recursive step.

\section{First blocks and quotient towers}

In this section, we study finite abelian groups of type \((v,k)\) endowed with a
subadditive height function. The goal is to decompose such a group by repeatedly
factoring out the subgroup generated by the unique element of height \(k\). We
first show that this subgroup determines the first block of heights. We then
iterate this construction to obtain a canonical quotient tower. Finally, for a
concrete subgroup \(G\subset[0,1)^N\), we realize this quotient tower as a
chain of subgroups of \(G\).

We begin with a simple observation on cyclic sequences of integers. It will be
used to show that the set of heights on a coset forms a consecutive block.

\begin{Lemma}\label{lem:cyclic-walk}
Let
$r_0,r_1,\ldots,r_{m-1}$
be pairwise distinct integers, and set \(r_m:=r_0\). Assume that
\[
r_{t+1}-r_t\le 1
\qquad
(t=0,1,\ldots,m-1).
\]
Then there exists an integer \(L\) such that
\[
\{r_0,r_1,\ldots,r_{m-1}\}=\{L,L+1,\ldots,L+m-1\}.
\]
\end{Lemma}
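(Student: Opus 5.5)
Let $L:=\min_t r_t$ and $U:=\max_t r_t$. Since the $r_t$ are $m$ pairwise distinct integers in $[L,U]$, we have $U-L\ge m-1$. It therefore suffices to show that every integer between $L$ and $U$ occurs among the $r_t$: then $\{r_0,\ldots,r_{m-1}\}=\{L,\ldots,U\}$, and counting forces $U=L+m-1$.

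To show this, use the cyclic structure. Choose indices $a,b$ with $r_a=U$ and $r_b=L$. Walk cyclically from $a$ forward to $b$, that is, along $r_a,r_{a+1},\ldots,r_b$ with indices taken modulo $m$ (and $r_m=r_0$). Along this walk each step satisfies $r_{t+1}-r_t\le 1$; in particular, the sequence can never \emph{decrease} past a value without hitting it? No: the constraint bounds increases, not decreases, so we should walk from $L$ up to $U$ instead. Walk cyclically from $b$ to $a$: the sequence starts at $L$, ends at $U$, and increases by at most $1$ per step. A discrete intermediate value argument then applies. For any integer $x$ with $L\le x\le U$, let $t$ be the first index along the walk with $r_t\ge x$; such an index exists because $r_a=U\ge x$. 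If $t=b$, then $r_b=L\ge x$ forces $x=L$. Otherwise the previous term satisfies $r_{t-1}<x\le r_t\le r_{t-1}+1$, so $r_t=x$. Hence every value in $[L,U]$ is attained.

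The only subtle point is the choice of walking direction: the hypothesis controls upward steps, so one must traverse the cycle from the minimum to the maximum. Because the sequence is cyclic, that path always exists. Apart from this, everything is a routine counting argument, and distinctness is used only to conclude $U=L+m-1$, via the count $m=|\{r_t\}|=U-L+1$.
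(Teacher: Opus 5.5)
Your proof is correct and follows the paper's argument: walk cyclically from an index attaining the minimum to one attaining the maximum, use $r_{t+1}-r_t\le 1$ (including the wrap-around step) to see that no integer in $[L,U]$ is skipped, and conclude $U-L+1=m$ by distinctness; your first-passage argument simply makes explicit the paper's ``cannot jump over'' step. In a final write-up, delete the false start about walking from the maximum down to the minimum.
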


\begin{proof}
Let \(L:=\min_t r_t\) and \(U:=\max_t r_t\). Choose an index \(a\) with
\(r_a=L\), and move cyclically from \(a\) until one first reaches an index
\(b\) with \(r_b=U\). Since every upward jump is at most \(1\), the sequence
cannot jump over any integer between \(L\) and \(U\). Hence every integer
\(L,L+1,\ldots,U\) appears among the \(r_t\). Since the \(r_t\) are pairwise
distinct and there are \(m\) of them, we have \(U-L+1=m\). Thus
\[
\{r_0,r_1,\ldots,r_{m-1}\}=\{L,L+1,\ldots,L+m-1\}.
\]
\end{proof}

We say that an integer-valued function \(\hei\) on an abelian group \(H\) is
\emph{subadditive} if
\[
\hei(u+v)\le \hei(u)+\hei(v)
\qquad
(u,v\in H).
\]
We will also need the following elementary fact on quotient heights.

\begin{Lemma}\label{lem:quotient-height-subadditive}
Let \(H\) be a finite abelian group endowed with an integer-valued subadditive
function \(\hei:H\to \ZZ_{\ge0}\). Let \(a\in H\), and put \(A:=\langle
a\rangle\). Assume that for every coset \(\overline{y}=y+A\in H/A\), the
minimum
\[
\min\{\hei(z):z\in \overline{y}\}
\]
is divisible by \(|A|\). Define
\[
\widetilde{\hei}(\overline{y})
:=
\frac1{|A|}\min\{\hei(z):z\in \overline{y}\}.
\]
Then \(\widetilde{\hei}\) is an integer-valued subadditive function on \(H/A\).
\end{Lemma}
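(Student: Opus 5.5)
Integrality is immediate from the hypothesis that each minimum is divisible by \(|A|\). The real content is subadditivity, and I would prove it directly by choosing minimizers in each coset. Let \(\overline{y}_1,\overline{y}_2\in H/A\), and pick \(z_1\in\overline{y}_1\) and \(z_2\in\overline{y}_2\) with
\[
\hei(z_1)=|A|\,\widetilde{\hei}(\overline{y}_1),
\qquad
\hei(z_2)=|A|\,\widetilde{\hei}(\overline{y}_2).
\]
Then \(z_1+z_2\) lies in the coset \(\overline{y}_1+\overline{y}_2\), so by the definition of the minimum,
\[
|A|\,\widetilde{\hei}(\overline{y}_1+\overline{y}_2)\le \hei(z_1+z_2).
\]

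Next apply subadditivity of \(\hei\) on \(H\). This gives \(\hei(z_1+z_2)\le \hei(z_1)+\hei(z_2)\), and the right-hand side equals
\[
|A|\bigl(\widetilde{\hei}(\overline{y}_1)+\widetilde{\hei}(\overline{y}_2)\bigr).
\]
Dividing by \(|A|>0\) yields \(\widetilde{\hei}(\overline{y}_1+\overline{y}_2)\le \widetilde{\hei}(\overline{y}_1)+\widetilde{\hei}(\overline{y}_2)\), which is the required inequality.

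I should also record that \(\widetilde{\hei}\) is well defined and nonnegative. It is well defined because the minimum depends only on the coset. It is nonnegative because \(\hei\) takes values in \(\ZZ_{\ge0}\). Together with the integrality from the divisibility hypothesis, this shows that \(\widetilde{\hei}\) maps \(H/A\) into \(\ZZ_{\ge0}\).

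I do not expect a real obstacle. The only point to state carefully is that minimizers exist, which holds because each coset is finite. The divisibility hypothesis plays no role in the subadditivity argument; it is used only to make \(\widetilde{\hei}\) integer-valued.
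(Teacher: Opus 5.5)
Your proposal is correct and follows the same argument as the paper. You choose minimum-height representatives of the two cosets, observe that their sum lies in the sum coset, apply subadditivity on \(H\), and divide by \(|A|\). Your extra remarks on integrality, well-definedness, nonnegativity, and the existence of minimizers (each coset is finite) are accurate; they make explicit points that the paper's proof leaves implicit.
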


\begin{proof}
Choose \(u\in \overline{x}\) and \(v\in \overline{y}\) such that
\[
\hei(u)=|A|\,\widetilde{\hei}(\overline{x}),
\qquad
\hei(v)=|A|\,\widetilde{\hei}(\overline{y}).
\]
Then \(u+v\in \overline{x}+\overline{y}\), so
\[
|A|\,\widetilde{\hei}(\overline{x}+\overline{y})
\le
\hei(u+v)
\le
\hei(u)+\hei(v)
=
|A|\,\widetilde{\hei}(\overline{x})
+
|A|\,\widetilde{\hei}(\overline{y}).
\]
Dividing by \(|A|\) gives the result.
\end{proof}

We now apply subadditivity to a group of type \((v,k)\). The first consequence
is that the powers of the unique element of height \(k\) have the expected
heights.

\begin{Lemma}\label{lem:powers-of-height-k}
Let \(H\) be a finite abelian group endowed with an integer-valued subadditive
function \(\hei\). Assume that \(H\) is of type \((v,k)\), and let \(a\in H\)
be the unique element of height \(k\). If \(m:=\ord(a)\), then
\[
\hei(ta)=tk
\qquad
(t=0,1,\ldots,m-1).
\]
\end{Lemma}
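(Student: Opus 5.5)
We argue by strong induction on \(t\), using only subadditivity and the fact that in a group of type \((v,k)\) the heights \(0,k,\ldots,(v-1)k\) each occur exactly once. The case \(t=0\) holds because the unique element of height \(0\) must be the identity: \(\hei(0)=\hei(0+0)\le 2\hei(0)\) gives nothing directly, so we argue differently. Every height is a multiple of \(k\). If \(\hei(0)=jk\) with \(j\ge1\), then \(\hei(0)=\hei(a+(-a))\le k+\hei(-a)\), which is not immediately contradictory. So we handle \(t=0\) separately, as follows.

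Note that \(\hei(0)\le \hei(x)+\hei(-x)\) for every \(x\), and the element \(x_0\) of height \(0\) satisfies \(\hei(t x_0)\le t\hei(x_0)=0\) for \(t\ge1\). Hence every multiple \(tx_0\) has height \(0\). By uniqueness of the height-\(0\) element, \(2x_0=x_0\), so \(x_0=0\). Thus \(\hei(0)=0\).

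For the inductive step, let \(1\le t\le m-1\). The elements \(0,a,2a,\ldots,(m-1)a\) are pairwise distinct, so their heights are pairwise distinct multiples of \(k\). By subadditivity, \(\hei(ta)\le \hei((t-1)a)+\hei(a)=(t-1)k+k=tk\), using the inductive hypothesis. On the other hand, the heights \(0,k,\ldots,(t-1)k\) are already attained by \(0,a,\ldots,(t-1)a\). Since each height occurs exactly once in \(H\) and \(ta\neq sa\) for \(s<t\), the height of \(ta\) cannot be any of \(0,k,\ldots,(t-1)k\). Being a multiple of \(k\) that is at most \(tk\), it must equal \(tk\).

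This completes the induction and gives \(\hei(ta)=tk\) for \(0\le t\le m-1\). The only potential obstacle is the base case, namely showing that the height-\(0\) element is the identity. This is settled by the observation above that all multiples of a height-zero element have height zero, which by uniqueness forces that element to be idempotent and hence equal to \(0\).
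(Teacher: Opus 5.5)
Your proof is correct and follows essentially the paper's argument: the subadditivity bound $\hei(ta)\le tk$ combined with injectivity of the height on the distinct elements $0,a,\ldots,(m-1)a$. The paper phrases this as a pigeonhole count for each $t$ rather than an induction. Your explicit check that the height-$0$ element is the identity, via $\hei(2x_0)\le 2\hei(x_0)=0$, is a small genuine improvement: the paper's bound $\hei(ta)\le t\,\hei(a)$ at $t=0$ tacitly assumes $\hei(0)=0$.
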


\begin{proof}
By subadditivity,
\[
\hei(ta)\le t\,\hei(a)=tk.
\]
Since \(H\) is of type \((v,k)\), every height is a multiple of \(k\). Hence
\[
\hei(ta)\in \{0,k,2k,\ldots,tk\}.
\]
For \(t=0,1,\ldots,m-1\), the elements \(0,a,\ldots,ta\) are pairwise distinct,
so their heights are pairwise distinct. Therefore
\[
\{\hei(0),\hei(a),\ldots,\hei(ta)\}
=
\{0,k,2k,\ldots,tk\},
\]
and in particular \(\hei(ta)=tk\).
\end{proof}

The next lemma is the main step of this section. It shows that the subgroup
generated by the unique element of height \(k\) cuts the height set into
consecutive blocks.

\begin{Lemma}\label{lem:abstract-first-block}
Let \(H\) be a finite abelian group endowed with an integer-valued subadditive
function \(\hei\). Assume that \(H\) is of type \((v,k)\), and let \(a\in H\)
be the unique element of height \(k\). Set \(m:=\ord(a)\) and
\(A:=\langle a\rangle\). Then the set of heights on every coset of \(A\) in
\(H\) is a consecutive block of length \(m\). More precisely, for every
\(y\in H\), there exists an integer \(q(y)\ge0\) such that
\[
\{\hei(y+ta):t=0,1,\ldots,m-1\}
=
\{q(y)mk,q(y)mk+k,\ldots,q(y)mk+(m-1)k\}.
\]
Consequently, if
\[
\widetilde{\hei}(y+A):=\frac1m\min\{\hei(z):z\in y+A\},
\]
then \(H/A\), endowed with \(\widetilde{\hei}\), is of type \((v/m,k)\), and
\(\widetilde{\hei}\) is subadditive.
\end{Lemma}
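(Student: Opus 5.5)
The plan is to pass to normalized heights $r(z):=\hei(z)/k$, which are integers because $H$ is of type $(v,k)$. We then apply Lemma~\ref{lem:cyclic-walk} to each coset of $A$ separately, and finally use a counting and tiling argument to pin down where each block starts.

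\emph{Step 1: each coset gives a consecutive block.} Fix $y\in H$ and set $r_t:=\hei(y+ta)/k$ for $t=0,1,\ldots,m-1$. Since $ma=0$, the convention $r_m:=r_0$ agrees with $\hei(y+ma)/k$. The elements $y+ta$ ($0\le t<m$) are pairwise distinct, and heights in $H$ are pairwise distinct because each of $0,k,\ldots,(v-1)k$ occurs exactly once. Hence the $r_t$ are pairwise distinct. By subadditivity,
\[
\hei(y+(t+1)a)\le \hei(y+ta)+\hei(a)=\hei(y+ta)+k,
\]
so $r_{t+1}-r_t\le 1$ for every $t$, including the wrap-around step from $t=m-1$ to $t=m$. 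Lemma~\ref{lem:cyclic-walk} then gives an integer $L(y)$ with
\[
\{r_0,\ldots,r_{m-1}\}=\{L(y),L(y)+1,\ldots,L(y)+m-1\}.
\]
In other words, the normalized heights on $y+A$ form an interval of length $m$.

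\emph{Step 2: the blocks start at multiples of $m$.} This is the only step needing care, though I expect it to be short. The cosets of $A$ partition $H$, and the normalized heights are exactly $\{0,1,\ldots,v-1\}$, each occurring once. So the $v/m$ intervals $[L(y),L(y)+m-1]$ are pairwise disjoint and tile $\{0,\ldots,v-1\}$. Order these intervals by their left endpoints. The first interval must contain $0$, so it starts at $0$; this is the coset $A$ itself, consistent with Lemma~\ref{lem:powers-of-height-k}. Each subsequent interval must begin immediately after the previous one ends, since otherwise some integer would be missed or covered twice. By induction the left endpoints are $0,m,2m,\ldots$. Therefore $L(y)=q(y)m$ for some integer $q(y)\ge0$, which is exactly the displayed description of $\{\hei(y+ta)\}$ in the statement.

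\emph{Step 3: the quotient.} By Step 2, $\min\{\hei(z):z\in y+A\}=q(y)mk$ is divisible by $m=|A|$. Hence Lemma~\ref{lem:quotient-height-subadditive} applies and shows that $\widetilde{\hei}$ is an integer-valued subadditive function on $H/A$, given by $\widetilde{\hei}(y+A)=q(y)k$. Finally, $|H/A|=v/m$, and by the tiling in Step 2 the map $y+A\mapsto q(y)$ is a bijection onto $\{0,1,\ldots,v/m-1\}$. Thus the heights of $H/A$ are $0,k,\ldots,(v/m-1)k$, each attained once, so $H/A$ is of type $(v/m,k)$.
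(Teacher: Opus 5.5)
Your proposal is correct and follows the paper's proof essentially step for step. Both apply Lemma~\ref{lem:cyclic-walk} on each coset using subadditivity with $\hei(a)=k$, use a tiling argument to force the block starts to be multiples of $m$, and invoke Lemma~\ref{lem:quotient-height-subadditive} for the quotient. Your Step~2 spells out the tiling induction more explicitly than the paper does, and the paper's opening appeal to Lemma~\ref{lem:powers-of-height-k} is indeed not needed, as you implicitly observe.
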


\begin{proof}
By Lemma~\ref{lem:powers-of-height-k},
\[
\hei(ta)=tk
\qquad
(t=0,1,\ldots,m-1).
\]
Fix \(y\in H\), and set
\[
r_t:=\frac1k\hei(y+ta)
\qquad
(t=0,1,\ldots,m-1),
\]
with \(r_m:=r_0\). Since \(a\) has order \(m\), the elements
\(y,y+a,\ldots,y+(m-1)a\) are pairwise distinct, hence their heights are
pairwise distinct. Therefore \(r_0,\ldots,r_{m-1}\) are pairwise distinct
integers.

By subadditivity,
\[
\hei(y+(t+1)a)
=
\hei((y+ta)+a)
\le
\hei(y+ta)+\hei(a)
=
\hei(y+ta)+k.
\]
Hence \(r_{t+1}-r_t\le 1\). By Lemma~\ref{lem:cyclic-walk}, there exists an
integer \(r\ge0\) such that
\[
\{r_0,\ldots,r_{m-1}\}=\{r,r+1,\ldots,r+m-1\}.
\]
Therefore
\[
\{\hei(y+ta):t=0,1,\ldots,m-1\}
=
\{rk,(r+1)k,\ldots,(r+m-1)k\}.
\]

The cosets of \(A\) partition \(H\), and the full height set of \(H\) is
\[
\{0,k,2k,\ldots,(v-1)k\}.
\]
These intervals are pairwise disjoint, since distinct elements of \(H\) have
distinct heights. Since they have length \(m\) and their union is
\(\{0,1,\ldots,v-1\}\), they must be
\[
\{0,\ldots,m-1\},\ \{m,\ldots,2m-1\},\ \ldots,\ \{v-m,\ldots,v-1\}.
\]
Hence the starting points are exactly
\[
0,m,2m,\ldots,\left(\frac vm-1\right)m.
\]
Therefore \(r=q(y)m\) for some \(q(y)\ge0\). This proves the block statement.

Now define
\[
\widetilde{\hei}(y+A)
=
\frac1m\min\{\hei(z):z\in y+A\}
=
q(y)k.
\]
As \(y+A\) runs through \(H/A\), the values \(q(y)\) are exactly
\[
0,1,\ldots,\frac vm-1.
\]
Hence \(H/A\), endowed with \(\widetilde{\hei}\), is of type \((v/m,k)\).
Subadditivity follows from Lemma~\ref{lem:quotient-height-subadditive}.
\end{proof}

We now iterate the first block construction.

\begin{Definition}\label{def:canonical-quotient-tower}
Let \(H\) be a finite abelian group of type \((v,k)\), endowed with an
integer-valued subadditive height function. Define recursively
\[
H_0:=H.
\]
If \(H_{i-1}\neq 0\), let \(a_i\in H_{i-1}\) be the unique element of height
\(k\), and put
\[
A_i:=\langle a_i\rangle,
\qquad
H_i:=H_{i-1}/A_i,
\]
where \(H_i\) is endowed with the quotient height from
Lemma~\ref{lem:abstract-first-block}. Since \(|H_i|<|H_{i-1}|\), this process
terminates after finitely many steps. We call the resulting sequence
\[
H=H_0,\ H_1,\ \ldots,\ H_s=0
\]
the canonical quotient tower of \(H\).
\end{Definition}

The quotient tower is defined abstractly. We next explain how, when the original
group is a concrete subgroup of \([0,1)^N\), this tower gives rise to a
canonical chain of subgroups inside the original group.

Before realizing the quotient tower for a concrete subgroup
\(G\subset[0,1)^N\), we recall the following structural fact about the unique
element of height \(k\) in a subgroup of type \((v,k)\).

\begin{Lemma}[{\cite[Lemma 3.5]{HibiTsuchiyaYoshida}}]\label{lem:height-k-rigidity}
Let \(G\subset[0,1)^N\) be a finite subgroup of type \((v,k)\), and let
\(\ab\in G\) be the unique element of height \(k\). Set \(m:=\ord(\ab)\), the
order of \(\ab\) in \(G\). Then, after permuting coordinates,
\[
\ab=
(\underbrace{1/m,\ldots,1/m}_{mk},0,\ldots,0).
\]
In particular,
\[
\hei(t\ab)=tk
\qquad
(t=0,1,\ldots,m-1).
\]
\end{Lemma}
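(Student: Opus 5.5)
We must prove: if \(\ab\in G\) is the unique element of height \(k\) and \(m=\ord(\ab)\), then after permuting coordinates \(\ab=(1/m,\ldots,1/m,0,\ldots,0)\) with exactly \(mk\) entries equal to \(1/m\). The plan is a height-sum comparison over the cyclic group \(\langle\ab\rangle\), in the spirit of the proof of Lemma~\ref{lem:converse-one-step}. The final assertion \(\hei(t\ab)=tk\) then follows immediately, with no carries.

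The first step uses that the height on \(G\subset[0,1)^N\) is subadditive. Indeed, \(\{x+y\}\le x+y\) coordinatewise, so \(\hei(\xb+\yb)\le\hei(\xb)+\hei(\yb)\). Lemma~\ref{lem:powers-of-height-k} then gives \(\hei(t\ab)=tk\) for \(0\le t\le m-1\). Summing over the cyclic group \(A=\langle\ab\rangle\), we get
\[
\sum_{t=0}^{m-1}\hei(t\ab)=k\binom{m}{2}.
\]

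Next, compute the same sum coordinatewise. Write \(a_i=b_i/e_i\) in lowest terms, where \(e_i\mid m\). As \(t\) runs over \(0,\dots,m-1\), the value \(\{ta_i\}\) runs over \(\{0,1/e_i,\dots,(e_i-1)/e_i\}\), each exactly \(m/e_i\) times (the argument of Lemma~\ref{lem:coordinate-values} applied to the projection of \(A\)). Hence the contribution of coordinate \(i\) is \((m/e_i)(e_i-1)/2\), which vanishes when \(a_i=0\). This yields
\[
\sum_{a_i\ne0}\frac{m(e_i-1)}{2e_i}=\frac{km(m-1)}{2},
\qquad\text{that is,}\qquad
\sum_{a_i\neq0}\Bigl(1-\frac1{e_i}\Bigr)=k(m-1).
\]

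On the other hand, \(\hei(\ab)=k\) gives \(\sum_{a_i\ne0} b_i/e_i=k\). Since \(b_i\ge1\) and \(1-1/e_i\le (m-1)/m\cdot(\ldots)\) is not directly comparable, I would instead use the element \((m-1)\ab=-\ab\). Its coordinates are \(1-a_i\) wherever \(a_i\ne0\), so
\[
\hei((m-1)\ab)=\#\{i:a_i\ne0\}-k=(m-1)k,
\]
which forces \(\#\supp(\ab)=mk\). Substituting this into the previous display gives \(\sum_{a_i\ne0}1/e_i=k\). Combined with \(\sum_{a_i\neq 0} b_i/e_i=k\), we obtain \(\sum_{a_i\neq 0}(b_i-1)/e_i=0\), so every \(b_i=1\). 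Thus \(\sum 1/e_i=k\) over \(mk\) terms, each with \(e_i\le m\), so each term satisfies \(1/e_i\ge 1/m\). Equality of the sum with \(mk\cdot(1/m)=k\) forces every \(e_i=m\), and therefore every nonzero coordinate equals \(1/m\).

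The main obstacle is finding the right pair of identities to pin down both the support size and the denominators. The key trick is the inversion \((m-1)\ab=-\ab\), which converts the heights of \(\ab\) and \(-\ab\) into a count of \(|\supp\ab|\). After that, the remaining steps are averaging inequalities and are routine.
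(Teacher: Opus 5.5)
Your proof is correct. Note that the paper gives no argument for this lemma; it imports it from \cite[Lemma 3.5]{HibiTsuchiyaYoshida}. What you supply is therefore a self-contained replacement, not a variant of a proof in the text. It fits the paper's framework and is not circular. Lemma~\ref{lem:powers-of-height-k} is proved for an abstract group with subadditive height, and you correctly note that the coordinatewise height on $G\subset[0,1)^N$ is subadditive. So $\hei(t\ab)=tk$ for $0\le t\le m-1$ is available before anything is known about the coordinates of $\ab$. Your three identities then pin everything down: the sum over $\langle\ab\rangle$, the equation $\hei(\ab)=k$, and $\hei(-\ab)=(m-1)k$, which gives $|\supp(\ab)|=mk$. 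Together they force $b_i=1$ and $e_i=m$, exactly as you say. You should delete the sentence containing ``$(m-1)/m\cdot(\ldots)$''; it is a leftover false start with a placeholder, not part of the argument.

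There is also a shorter route that avoids the denominators $e_i$ and the averaging. For $0\le t\le m-2$ you have $\hei((t+1)\ab)=\hei(t\ab)+\hei(\ab)$. Moreover, $\hei(\xb+\yb)$ equals $\hei(\xb)+\hei(\yb)$ minus the number of coordinates with $x_i+y_i\ge1$. Hence no coordinate wraps around along the multiples of $\ab$. By induction $\{ta_i\}=ta_i$ for all $t\le m-1$, so $(m-1)a_i<1$. Combined with $ma_i\in\ZZ$, this gives $ma_i\in\{0,1\}$, that is, $a_i\in\{0,1/m\}$. Then $\hei(\ab)=k$ forces exactly $mk$ coordinates equal to $1/m$. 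This no-carry mechanism is the one the paper uses in Lemma~\ref{lem:construction-from-h}. Your height-sum comparison mirrors Lemma~\ref{lem:converse-one-step} instead. Both are valid; the no-carry version is simply more economical here.
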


We now realize the canonical quotient tower inside \(G\) as a chain of
subgroups. The previous lemma will later allow us to describe the first step of
this chain explicitly.
\begin{Lemma}\label{lem:kernel-tower}
Let \(G\subset[0,1)^N\) be a finite subgroup of type \((v,k)\) with no zero
coordinate. Regard \(G\) as a finite abelian group endowed with the height
function \(\hei\), and let
\[
G=H_0,\ H_1,\ \ldots,\ H_s=0
\]
be its canonical quotient tower. Let
\[
M_i:=|A_1|\cdots |A_i|
\qquad
(i\ge1),
\]
with \(M_0:=1\). For each \(i\), let \(\rho_i:G\to H_i\) be the composite of
the quotient maps from \(H_0=G\) to \(H_i\), and define
$G_i:=\Ker(\rho_i)$.
Then the following hold:
\begin{enumerate}[(i)]
\item \(G/G_i\cong H_i\) for each \(i\);
\item \(G_i/G_{i-1}\cong A_i\) for each \(i\ge1\); in particular, if
\(n_i:=|A_i|\), then \(|G_i/G_{i-1}|=n_i\) and \(M_i=M_{i-1}n_i\);
\item if \(\overline{\cb}_i\in G/G_{i-1}\) corresponds to \(a_i\in H_{i-1}\),
and if \(\cb_i\in G_i\) is the unique element of minimum height in the coset
\(\overline{\cb}_i\), then
\[
G_i=G_{i-1}+\langle \cb_i\rangle
\qquad\text{and}\qquad
n_i\cb_i\in G_{i-1}.
\]
\end{enumerate}
\end{Lemma}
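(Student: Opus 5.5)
The plan is to treat the lemma as bookkeeping with the isomorphism theorems, once we have checked that the canonical quotient tower of \(G\) is defined at all. For that, Definition~\ref{def:canonical-quotient-tower} needs \(\hei\) to be subadditive on \(G\). This holds because addition in \(G\) is coordinatewise addition modulo \(1\), and for \(x_i,y_i\in[0,1)\) we have \(\{x_i+y_i\}\le x_i+y_i\). Summing over \(i\) gives \(\hei(\xb+\yb)\le \hei(\xb)+\hei(\yb)\). Lemma~\ref{lem:abstract-first-block} then applies at every stage, so each \(H_i\) is a group of type \((v/M_i,k)\) with a subadditive quotient height, and the tower \(H_0,\ldots,H_s\) with quotient maps \(q_i:H_{i-1}\to H_i\) (kernel \(A_i\)) is well defined.

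For (i), each \(q_j\) is surjective, so \(\rho_i=q_i\circ\cdots\circ q_1\) is surjective, and the first isomorphism theorem gives \(G/G_i\cong H_i\). For (ii), the identity \(\rho_i=q_i\circ\rho_{i-1}\) gives \(G_{i-1}\subseteq G_i\). The isomorphism \(\overline{\rho}_{i-1}:G/G_{i-1}\to H_{i-1}\) induced by \(\rho_{i-1}\) carries \(G_i/G_{i-1}\) onto \(\rho_{i-1}(G_i)\). A short check shows this image is \(\Ker(q_i)=A_i\): if \(\rho_{i-1}(\xb)\in A_i\) then \(\rho_i(\xb)=0\), and conversely. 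Hence \(G_i/G_{i-1}\cong A_i\), so \(|G_i/G_{i-1}|=n_i\). Induction from \(G_0=0\) then gives \(|G_i|=M_i\) and \(M_i=M_{i-1}n_i\).

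For (iii), the element \(\overline{\cb}_i=\overline{\rho}_{i-1}^{-1}(a_i)\) generates the cyclic group \(G_i/G_{i-1}\), so the whole coset \(\overline{\cb}_i\) (a coset of \(G_{i-1}\) in \(G\)) lies in \(G_i\). Because \(G\) is of type \((v,k)\), distinct elements have distinct heights, so the minimum-height element \(\cb_i\) of this coset exists and is unique. Since \(\cb_i\) maps to the generator \(a_i\), the quotient \(G_i/G_{i-1}\) is generated by the image of \(\cb_i\), which gives \(G_i=G_{i-1}+\langle\cb_i\rangle\). Since \(a_i\) has order \(n_i\), we get \(n_i\overline{\cb}_i=0\), i.e.\ \(n_i\cb_i\in G_{i-1}\).

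The only step needing genuine care is the first one: verifying that the concrete height on \(G\) is subadditive, and keeping straight that \(\overline{\cb}_i\) is a coset of \(G_{i-1}\) in \(G\) and not an element of \(H_{i-1}\). Everything after that is formal, and neither the quotient heights on \(H_i\) nor Lemma~\ref{lem:height-k-rigidity} is needed for this lemma.
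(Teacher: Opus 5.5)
Your proof is correct and follows essentially the same route as the paper: surjectivity of $\rho_i$ plus the first isomorphism theorem for (i), identifying $G_i/G_{i-1}$ with $A_i=\Ker(q_i)$ via the canonical isomorphism $G/G_{i-1}\cong H_{i-1}$ for (ii), and lifting the generator $a_i$ for (iii). You also make explicit several points the paper leaves implicit here, and they are sound: subadditivity of the concrete height (so the tower is defined), uniqueness of the minimum-height lift (from distinct heights), and $|G_i|=M_i$.
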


\begin{proof}
For each \(i\), the homomorphism \(\rho_i:G\to H_i\) is surjective and, by
definition,
$G_i=\Ker(\rho_i)$.
Hence the first isomorphism theorem gives
$G/G_i\cong H_i$.
This proves (i).

Now fix \(i\ge1\). Since
$H_i=H_{i-1}/A_i$,
the kernel of the quotient map \(H_{i-1}\to H_i\) is \(A_i\). Since
\(G_{i-1}=\Ker(\rho_{i-1})\), the map \(\rho_i\) induces a homomorphism
$\overline{\rho}_i:G/G_{i-1}\to H_i$.
Its kernel is
\[
\Ker(\overline{\rho}_i)
=
\{x+G_{i-1}:x\in \Ker(\rho_i)\}
=
G_i/G_{i-1},
\]
since \(\Ker(\rho_i)=G_i\). Under the canonical isomorphism
$G/G_{i-1}\cong H_{i-1}$,
this kernel corresponds to \(A_i\). Hence
$G_i/G_{i-1}\cong A_i$.
Therefore \(|G_i/G_{i-1}|=|A_i|=n_i\), and
$M_i=M_{i-1}n_i$.
This proves (ii).

Let \(\overline{\cb}_i\in G/G_{i-1}\) be the element corresponding to
\(a_i\in H_{i-1}\) under the canonical isomorphism
$G/G_{i-1}\cong H_{i-1}$.
By (ii), the subgroup \(G_i/G_{i-1}\subset G/G_{i-1}\) corresponds to
\(A_i=\langle a_i\rangle\subset H_{i-1}\). Therefore
$G_i/G_{i-1}=\langle \overline{\cb}_i\rangle$.
Now let \(\cb_i\in G_i\) be any lift of \(\overline{\cb}_i\). Since the image of
\(\langle \cb_i\rangle\) in \(G/G_{i-1}\) is \(\langle \overline{\cb}_i\rangle\),
we obtain
$G_i=G_{i-1}+\langle \cb_i\rangle$.
Choosing the lift \(\cb_i\) of minimum height gives the stated element
\(\cb_i\). Finally, since \(\overline{\cb}_i\) has order \(n_i\) in
\(G/G_{i-1}\), one has
$n_i\cb_i\in G_{i-1}$.
This proves (iii).
\end{proof}

\begin{Lemma}\label{lem:concrete-quotient-height}
In the notation of Lemma~\ref{lem:kernel-tower}, for each \(i\ge0\), the
canonical isomorphism
$G/G_i\cong H_i$
is height-preserving in the following sense. If \(\overline{\xb}\in G/G_i\)
corresponds to \(\eta\in H_i\), then
\[
\hei_{H_i}(\eta)
=
\frac1{M_i}\min\{\hei_G(z):z\in \overline{\xb}\},
\]
where \(M_i=|G_i|\), \(\hei_G\) denotes the ordinary height on \(G\), and
\(\hei_{H_i}\) denotes the height on \(H_i\).
\end{Lemma}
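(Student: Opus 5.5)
We prove the claim by induction on \(i\), using the recursive definition of the quotient heights. Here \(M_i=|G_i|\) holds by Lemma~\ref{lem:kernel-tower}(ii), since \(|G_i|=|G_{i-1}|\,n_i\) and \(G_0=0\). For \(i=0\) we have \(G_0=\Ker(\rho_0)=0\) and \(H_0=G\), so the statement is the tautology \(\hei_{H_0}(\eta)=\hei_G(\xb)\). First we must make sure the induction is legitimate. The quotient height on \(H_i\) is defined via Lemma~\ref{lem:abstract-first-block}, which requires \(H_{i-1}\) to be of type \((v/M_{i-1},k)\) with a subadditive height. This holds inductively: \(\hei_G\) is subadditive on \(G\) because adding coordinates modulo \(1\) can only lose integer carries, so \(\sum\{x_j+y_j\}\le \sum x_j+\sum y_j\). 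Lemma~\ref{lem:abstract-first-block} then propagates type and subadditivity to each \(H_i\).

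For the inductive step, assume the formula for \(i-1\). Let \(\overline{\xb}=\xb+G_i\in G/G_i\) correspond to \(\eta\in H_i\), and let \(\eta=\eta'+A_i\) with \(\eta'\in H_{i-1}\). The preimage of the coset \(\eta'+A_i\) under \(G/G_{i-1}\cong H_{i-1}\) is the set of cosets \(\yb+G_{i-1}\) with \(\yb\in \xb+G_i\). This uses \(G_i/G_{i-1}\leftrightarrow A_i\) from Lemma~\ref{lem:kernel-tower}(ii) and compatibility of the maps \(\rho_{i-1},\rho_i\). By definition,
\[
\hei_{H_i}(\eta)=\frac1{n_i}\min_{\zeta\in \eta'+A_i}\hei_{H_{i-1}}(\zeta).
\]
By the inductive hypothesis, each \(\hei_{H_{i-1}}(\zeta)\) equals \(\frac1{M_{i-1}}\min\{\hei_G(z): z\in \yb+G_{i-1}\}\) for the corresponding coset. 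Taking the minimum over the \(n_i\) cosets \(\yb+G_{i-1}\) contained in \(\xb+G_i\) gives the minimum of \(\hei_G\) over all of \(\xb+G_i\), since these cosets partition \(\xb+G_i\). Therefore
\[
\hei_{H_i}(\eta)=\frac1{n_iM_{i-1}}\min\{\hei_G(z):z\in\overline{\xb}\}=\frac1{M_i}\min\{\hei_G(z):z\in\overline{\xb}\}.
\]

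The main point needing care is the bookkeeping of the identifications. We must check that the canonical isomorphism \(G/G_i\cong H_i\) factors as \(G/G_i\cong (G/G_{i-1})/(G_i/G_{i-1})\cong H_{i-1}/A_i\), so that cosets of \(G_i\) correspond exactly to \(A_i\)-cosets of the images of \(G_{i-1}\)-cosets. This follows from \(\rho_i=q_i\circ\rho_{i-1}\), where \(q_i:H_{i-1}\to H_i\) is the quotient map, and the rest is a nested-minimum computation. No further input is needed; in particular, the divisibility conditions required to define \(\widetilde{\hei}\) are already supplied by Lemma~\ref{lem:abstract-first-block}.
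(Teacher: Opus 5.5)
Your proof is correct and essentially the same as the paper's: induction on $i$ with a tautological base case, the compatibility $\rho_i=q_i\circ\rho_{i-1}$ (the paper's commutative square with $\tau_i$ and $\pi_i$), the partition of a $G_i$-coset into $n_i$ cosets of $G_{i-1}$, and the nested-minimum computation using $M_i=M_{i-1}|A_i|$. Your extra check that $\hei_G$ is subadditive, so the quotient heights are well defined, is a harmless addition; the paper leaves this implicit here and uses it explicitly only later.
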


\begin{proof}
We argue by induction on \(i\). For \(i=0\), one has \(M_0=1\), \(G_0=0\), and
\(H_0=G\), so the statement is tautological.

Assume that the assertion holds for \(i-1\). Let
\[
\pi_i:H_{i-1}\to H_i=H_{i-1}/A_i
\]
and
$\tau_i:G/G_{i-1}\to G/G_i$
be the natural quotient maps. The canonical isomorphisms
\[
G/G_{i-1}\cong H_{i-1},
\qquad
G/G_i\cong H_i
\]
are compatible with these quotient maps, as shown in the commutative diagram
\[
\begin{CD}
G/G_{i-1} @>{\tau_i}>> G/G_i \\
@V{\simeq}VV            @VV{\simeq}V \\
H_{i-1} @>{\pi_i}>> H_i .
\end{CD}
\]
Let \(\overline{\xb}\in G/G_i\), and let \(\eta\in H_i\) be the corresponding
element. By the commutativity of the diagram,
\(\tau_i^{-1}(\overline{\xb})\) corresponds to \(\pi_i^{-1}(\eta)\). Moreover,
regarding cosets as subsets of \(G\), the coset \(\overline{\xb}\) is
partitioned by the cosets in \(\tau_i^{-1}(\overline{\xb})\):
\[
\overline{\xb}
=
\bigsqcup_{\overline{\yb}\in \tau_i^{-1}(\overline{\xb})}
\overline{\yb}.
\]
By definition of the quotient height on \(H_i\),
\[
\hei_{H_i}(\eta)
=
\frac1{|A_i|}
\min\{\hei_{H_{i-1}}(u):u\in \pi_i^{-1}(\eta)\}.
\]
Using the correspondence between \(\tau_i^{-1}(\overline{\xb})\) and
\(\pi_i^{-1}(\eta)\), together with the induction hypothesis, we obtain
\[
\hei_{H_i}(\eta)
=
\frac1{|A_i|}
\min_{\overline{\yb}\in \tau_i^{-1}(\overline{\xb})}
\left\{
\frac1{M_{i-1}}
\min\{\hei_G(z):z\in \overline{\yb}\}
\right\}.
\]
Since
\[
\overline{\xb}
=
\bigsqcup_{\overline{\yb}\in \tau_i^{-1}(\overline{\xb})}
\overline{\yb}
\]
as subsets of \(G\), we have
\[
\min_{\overline{\yb}\in \tau_i^{-1}(\overline{\xb})}
\min\{\hei_G(z):z\in \overline{\yb}\}
=
\min\{\hei_G(z):z\in \overline{\xb}\}.
\]
Hence
\[
\hei_{H_i}(\eta)
=
\frac1{|A_i|}\cdot\frac1{M_{i-1}}
\min\{\hei_G(z):z\in \overline{\xb}\}.
\]
Since \(M_i=M_{i-1}|A_i|\), this is
\[
\hei_{H_i}(\eta)
=
\frac1{M_i}\min\{\hei_G(z):z\in \overline{\xb}\}.
\]
This proves the assertion.
\end{proof}

\begin{Lemma}\label{lem:Gi-type-and-block}
In the notation of Lemma~\ref{lem:kernel-tower}, each \(G_i\) is of type
\((M_i,k)\). Moreover, for each \(i\ge1\), if
\[
n_i=\frac{M_i}{M_{i-1}},
\]
then the heights in the coset \(r\cb_i+G_{i-1}\) are exactly
\[
rM_{i-1}k,\ rM_{i-1}k+k,\ \ldots,\ rM_{i-1}k+(M_{i-1}-1)k
\]
for each \(r=0,1,\ldots,n_i-1\).
\end{Lemma}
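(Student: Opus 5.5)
We argue by induction on \(i\), with everything driven by the height-preserving identification of Lemma~\ref{lem:concrete-quotient-height}. Three preliminary facts set this up.

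First, the ordinary height on \(G\) is subadditive. Indeed, \(\{x_j+y_j\}\le x_j+y_j\) in each coordinate. Hence the canonical quotient tower is defined, and by Lemma~\ref{lem:abstract-first-block} applied iteratively, each \(H_i\) is of type \((v/M_i,k)\).

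Second, we note \(|G_i|=M_i\). This follows from Lemma~\ref{lem:kernel-tower}(ii) by induction starting at \(G_0=0\).

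Third, the sets \(G_i\) are nested blocks of small heights. By Lemma~\ref{lem:concrete-quotient-height} with \(\overline{\xb}=G_i\) (the zero coset, corresponding to \(0\in H_i\), whose height is \(0\)), we get nothing directly. So instead we use the coset-block structure. For every coset \(y+G_i\), the minimum height is \(M_i\hei_{H_i}(\eta)\in\{0,M_ik,\dots,(v/M_i-1)M_ik\}\), and distinct cosets get distinct minima.

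Since the cosets of \(G_i\) partition \(G\), there are \(v/M_i\) of them, each of size \(M_i\). The heights of \(G\) are exactly \(0,k,\dots,(v-1)k\). The coset with minimum \(qM_ik\) must contain its own minimum, and its other \(M_i-1\) heights are distinct and larger.

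The key step is to show that each coset of \(G_i\) carries exactly the block \(\{qM_ik,\dots,qM_ik+(M_i-1)k\}\). I would prove this by downward counting on \(q\). The coset with the largest minimum \((v/M_i-1)M_ik\) has \(M_i\) distinct heights, all at least that minimum and at most \((v-1)k\). So it is exactly the top block. Removing it and repeating with the next largest minimum gives every block. In particular \(G_i\) itself, the coset with minimum \(0\), has heights \(\{0,k,\dots,(M_i-1)k\}\), so \(G_i\) is of type \((M_i,k)\).

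For the coset statement, fix \(i\ge1\). The group \(G_i\) is the disjoint union of the cosets \(r\cb_i+G_{i-1}\), \(r=0,\dots,n_i-1\). This holds because \(G_i/G_{i-1}\) is cyclic of order \(n_i\), generated by the image of \(\cb_i\), by Lemma~\ref{lem:kernel-tower}(iii). Each such coset is a coset of \(G_{i-1}\) in \(G\). By the block statement just proved at level \(i-1\), each carries a block \(\{qM_{i-1}k,\dots,qM_{i-1}k+(M_{i-1}-1)k\}\). Since \(G_i\) has heights \(\{0,\dots,(M_i-1)k\}\), the \(q\)'s occurring are exactly \(0,\dots,n_i-1\). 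It remains to show that the coset \(r\cb_i+G_{i-1}\) gets \(q=r\).

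This identification is the main obstacle. For it I would use Lemma~\ref{lem:concrete-quotient-height} at level \(i-1\). The coset \(r\cb_i+G_{i-1}\) corresponds to \(ra_i\in H_{i-1}\), so its minimum height is \(M_{i-1}\hei_{H_{i-1}}(ra_i)\). By Lemma~\ref{lem:powers-of-height-k} applied to \(H_{i-1}\), which is of type \((v/M_{i-1},k)\) with subadditive height, we have \(\hei_{H_{i-1}}(ra_i)=rk\) for \(0\le r<n_i=\ord(a_i)\). Hence the minimum is \(rM_{i-1}k\), that is, \(q=r\), giving exactly the claimed heights \(rM_{i-1}k,\dots,rM_{i-1}k+(M_{i-1}-1)k\).
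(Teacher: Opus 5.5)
Your proof is correct, but the key step differs from the paper's.

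The paper inducts on \(i\), taking the base case \(i=1\) from Lemma~\ref{lem:height-k-rigidity}. Assuming \(G_{i-1}\) has type \((M_{i-1},k)\), it gets the minimum \(rM_{i-1}k\) of the coset \(r\cb_i+G_{i-1}\) from Lemmas~\ref{lem:concrete-quotient-height} and~\ref{lem:powers-of-height-k}, exactly as you do. It then gets the matching upper bound from subadditivity of the concrete height: if \(y_r\) attains the minimum, then \(\hei(y_r+g)\le rM_{i-1}k+(M_{i-1}-1)k\) for all \(g\in G_{i-1}\). Pigeonhole on the \(M_{i-1}\) distinct heights finishes, and taking the union over \(r\) gives the type of \(G_i\).

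You never bound a coset from above by subadditivity. Instead you use that \(H_i\) is of type \((v/M_i,k)\), so the \(v/M_i\) cosets of \(G_i\) in all of \(G\) have the distinct minima \(0,M_ik,\ldots,(v-M_i)k\). A downward pigeonhole argument then forces each coset to carry a full block.

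What your route buys:
\begin{itemize}
\item It gives a slightly stronger, non-inductive statement: every coset of every \(G_i\) in \(G\), not only those lying inside \(G_{i+1}\), carries a block \(\{qM_ik,\ldots,(q+1)M_ik-k\}\).
\item It does not need the external rigidity lemma for \(i=1\).
\end{itemize}
The paper's route is more local: it only uses information about \(G_{i-1}\) and the single coset at hand.

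A small remark: your opening ``by induction on \(i\)'' is not actually needed. Your block argument works for each \(i\) independently, given that every \(H_i\) has type \((v/M_i,k)\) with subadditive height by iterating Lemma~\ref{lem:abstract-first-block}.
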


\begin{proof}
We argue by induction on \(i\). For \(i=1\), the subgroup
\(G_1=\langle \cb_1\rangle\) is cyclic of order \(M_1\), and \(\cb_1\) is the
unique element of height \(k\). By Lemma~\ref{lem:height-k-rigidity},
\[
\hei(t\cb_1)=tk
\qquad
(t=0,1,\ldots,M_1-1),
\]
so \(G_1\) is of type \((M_1,k)\).

Assume now that \(G_{i-1}\) is of type \((M_{i-1},k)\), and let
\(n_i=M_i/M_{i-1}\). By Lemma~\ref{lem:concrete-quotient-height}, the element
\[
r\cb_i+G_{i-1}\in G/G_{i-1}
\]
has quotient height \(rk\), because it corresponds to \(ra_i\in H_{i-1}\), whose
height is \(rk\) by Lemma~\ref{lem:powers-of-height-k}. Therefore
\[
\min\{\hei_G(z):z\in r\cb_i+G_{i-1}\}
=
M_{i-1}\cdot rk
=
rM_{i-1}k.
\]
Choose an element \(y_r\in r\cb_i+G_{i-1}\) such that
\(\hei_G(y_r)=rM_{i-1}k\).

Now let \(g\in G_{i-1}\). Since \(G_{i-1}\) is of type \((M_{i-1},k)\), one has
\(\hei_G(g)\in\{0,k,\ldots,(M_{i-1}-1)k\}\). The ordinary height on
\(G\subset [0,1)^N\) is subadditive, so
\[
\hei_G(y_r+g)\le \hei_G(y_r)+\hei_G(g)\le rM_{i-1}k+(M_{i-1}-1)k.
\]
Thus every height in the coset \(r\cb_i+G_{i-1}\) belongs to
\[
\{rM_{i-1}k,\ rM_{i-1}k+k,\ \ldots,\ rM_{i-1}k+(M_{i-1}-1)k\}.
\]
The coset \(r\cb_i+G_{i-1}\) has exactly \(M_{i-1}\) elements. Since \(G\) is of
type \((v,k)\), distinct elements of \(G\) have distinct heights. Hence the
heights in this coset are exactly the \(M_{i-1}\) values displayed above.

Taking the union over \(r=0,1,\ldots,n_i-1\), we see that the heights in \(G_i\)
are exactly
\[
0,k,2k,\ldots,(M_i-1)k.
\]
Thus \(G_i\) is of type \((M_i,k)\).
\end{proof}

\section{Classification and counting}

We now prove the classification theorem for finite subgroups of type \((v,k)\)
and derive the counting formula.

\begin{Theorem}\label{thm:tower-classification}
Coordinate-permutation equivalence classes of finite subgroups
$G\subset[0,1)^N$
of type \((v,k)\) with no zero coordinate are in bijection with the following data:
\begin{enumerate}[(i)]
\item a strict divisor chain
\[
1=M_0<M_1<\cdots<M_s=v;
\]
\item for each \(i=1,\ldots,s\), a subset
$J_i\subseteq \Lc_{i-1}$,
where the finite sets \(\Lc_i\) are recursively defined by
\[
\Lc_0=\emptyset,
\qquad
\Lc_i=(\Lc_{i-1}\setminus J_i)\cup\{\ell_i\},
\]
and \(\ell_i\) is a formal symbol created at the \(i\)-th step.
\end{enumerate}
\end{Theorem}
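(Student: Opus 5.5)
We construct a map from data to groups and a map from groups to data, and show they are mutually inverse up to coordinate permutation.

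\emph{Data to group.} Given a chain $1=M_0<\cdots<M_s=v$, put $n_i:=M_i/M_{i-1}\ge2$. Step $1$ is the initial step of the recursive construction of Section~4 with $m=n_1$, which produces $G_1$ and $\Lc(G_1)=\{\cb_1\}$; we identify $\cb_1$ with $\ell_1$. Note that $J_1=\emptyset$ is forced, since $\Lc_0=\emptyset$. Inductively, the symbols of $\Lc_{i-1}$ correspond bijectively to the elements of $\Lc(G_{i-1})$. Given $J_i\subseteq\Lc_{i-1}$, take $\hb_i:=\sum_{\ub\in J_i}\ub$. By Theorem~\ref{thm:subset-sums} this satisfies the coordinate condition, so Lemma~\ref{lem:construction-from-h} with $n=n_i$ yields $G_i=G_{i-1}+\langle\cb_i\rangle$ of type $(M_i,k)$, and we identify $\ell_i$ with $\cb_i$. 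The new $1/n_i$ coordinates guarantee that $G_i$ has no zero coordinate. Taking $i=s$ gives a group of type $(v,k)$ with no zero coordinate.

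\emph{Group to data.} Given $G$, form the canonical quotient tower of Definition~\ref{def:canonical-quotient-tower}; this is legitimate because the ordinary height is subadditive. Lemma~\ref{lem:kernel-tower} then gives the chain $G_i$, the orders $M_i$ (a strict divisor chain, since each $n_i\ge2$), and the elements $\cb_i$. Lemma~\ref{lem:Gi-type-and-block} shows that each step $G_{i-1}\subset G_i$ satisfies the hypotheses of Lemma~\ref{lem:converse-one-step}, once $G_{i-1}$ is restricted to its support so that it has no zero coordinate. Hence $\hb_i:=n_i\cb_i$ satisfies the coordinate condition, and $G_i$ is, up to the deleted zero coordinates, exactly the construction of Lemma~\ref{lem:construction-from-h} from $\hb_i$.

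We then argue by induction that $(G_{i-1},\Lc(G_{i-1}))$ is one of the recursively constructed pairs. The base case holds because Lemma~\ref{lem:height-k-rigidity} shows $G_1$ is the initial group with $m=M_1$. For the inductive step, Theorem~\ref{thm:subset-sums} expresses $\hb_i$ as $\sum_{\ub\in J_i}\ub$ for a unique $J_i\subseteq\Lc(G_{i-1})$, and this defines the data. Restoring the zero coordinates on which later generators live is harmless, because coordinates are only ever appended.

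\emph{Inverse property and well-definedness.} Applying the tower to a constructed group must recover the same $M_i$, $\cb_i$, and $J_i$. The key check is that in a constructed $G$, the unique height-$k$ element of $G/G_{i-1}$ is the image of $\cb_i$, and that the minimal-height lift is $\cb_i$ itself. Both follow from the coset height blocks in Lemma~\ref{lem:construction-from-h}: heights in $r\cb_i+G_{i-1}$ start at $rM_{i-1}k$, and heights in the cosets of $G_i$ inside $G_{i+1}$ start at multiples of $M_ik$. Lemma~\ref{lem:concrete-quotient-height} then shows that $G_i/G_{i-1}$ matches $A_i$, so $G_i$ coincides with the tower kernels. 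Uniqueness of $J_i$ comes from Theorem~\ref{thm:subset-sums}. For the opposite composite, the group-to-data-to-group round trip reproduces $G$ up to permutation of coordinates, by Theorem~\ref{thm:one-step-extension}.

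Finally, the tower is defined intrinsically by the heights. It is therefore invariant under coordinate permutations, so the data depend only on the equivalence class. Conversely, the construction is determined up to the ordering of the new coordinates.

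The main obstacle is this last compatibility step: showing that the canonical tower of a constructed group recovers exactly the generators $\cb_i$ used in the construction. The difficulty is that the tower is defined from height-$k$ elements of successive abstract quotients. I would handle it by induction on $s$, using the height-block description of Lemma~\ref{lem:construction-from-h} to identify the height-$k$ class in $G/G_{i-1}$ at each stage.
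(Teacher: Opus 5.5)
Your proposal is correct and follows the paper's route. You build $G$ from the data via Lemma~\ref{lem:construction-from-h} and Theorem~\ref{thm:subset-sums}, and you recover the data from $G$ via the canonical quotient tower, Lemma~\ref{lem:Gi-type-and-block}, Lemma~\ref{lem:converse-one-step}, and the uniqueness in Theorem~\ref{thm:subset-sums}. Your inductive check that the tower of a constructed group has kernels exactly the constructed $G_i$ fills in a step the paper only asserts: the coset $\cb_i+G_{i-1}$ has minimum height $M_{i-1}k$, so by Lemma~\ref{lem:concrete-quotient-height} its image is the unique height-$k$ element of $H_{i-1}$, with minimal lift $\cb_i$. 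The same holds for your remark that the tower, and hence the data, is invariant under coordinate permutations.
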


\begin{proof}
We construct maps in both directions.

First, suppose that combinatorial data as in the statement are given. Put
\[
n_i:=\frac{M_i}{M_{i-1}}
\qquad
(i=1,\ldots,s).
\]
We construct finite subgroups
\[
0=G_0\subset G_1\subset\cdots\subset G_s
\]
such that \(G_i\) is of type \((M_i,k)\), together with identifications
\[
\ell_j\longleftrightarrow \cb_j
\qquad
(j=1,\ldots,i),
\]
under which \(\Lc_i\) corresponds to \(\Lc(G_i)\).

At the first step, since \(\Lc_0=\emptyset\), one has \(J_1=\emptyset\). We
define
\[
\cb_1=
\left(\frac1{M_1},\ldots,\frac1{M_1}\right)\in [0,1)^{M_1k},
\qquad
G_1=\langle \cb_1\rangle.
\]
Then \(G_1\) is of type \((M_1,k)\), and by the initial step of the recursive
construction one has \(\Lc(G_1)=\{\cb_1\}\). Identifying
\(\ell_1\) with \(\cb_1\), the formal set
$\Lc_1=(\Lc_0\setminus J_1)\cup\{\ell_1\}$
corresponds to \(\Lc(G_1)\).

Now suppose that \(G_{i-1}\) has already been constructed, and that the formal
symbols in \(\Lc_{i-1}\) have been identified with the corresponding elements of
\(\Lc(G_{i-1})\). Then the chosen subset
$J_i\subseteq \Lc_{i-1}$
determines a subset of \(\Lc(G_{i-1})\), which we denote again by \(J_i\). Set
$\hb_i:=\sum_{\ub\in J_i}\ub\in G_{i-1}$.
By Theorem~\ref{thm:subset-sums}, \(\hb_i\) satisfies
\[
(h_i)_r\in\left\{0,\frac1{d_r(G_{i-1})}\right\}
\]
for every coordinate \(r\). Hence Lemma~\ref{lem:construction-from-h} yields
$G_i=G_{i-1}+\langle \cb_i\rangle$
such that
$G_i/G_{i-1}\cong C_{n_i}$,
$G_i$ is of type $(M_i,k)$ and
$n_i\cb_i=\hb_i$.
Moreover,
$\Lc(G_i)=(\Lc(G_{i-1})\setminus J_i)\cup\{\cb_i\}$.
Identifying \(\ell_i\) with \(\cb_i\), we see that the formal set
$\Lc_i=(\Lc_{i-1}\setminus J_i)\cup\{\ell_i\}$
corresponds to \(\Lc(G_i)\).

Proceeding inductively, we obtain a finite subgroup \(G_s\) of type \((v,k)\).
Moreover, \(G_s\) has no zero coordinate, because this is clear for \(G_1\), and
each recursive step only adds new nonzero coordinates while preserving the old
ones. Thus the construction determines a coordinate-permutation equivalence
class of finite subgroups of type \((v,k)\) with no zero coordinate.

Conversely, let \(G\subset[0,1)^N\) be a finite subgroup of type \((v,k)\) with
no zero coordinate. We now construct the combinatorial data appearing in the
statement.

Consider the canonical quotient tower
\[
G=H_0,\ H_1,\ \ldots,\ H_s=0.
\]
For each \(i\ge1\), let \(a_i\in H_{i-1}\) be the unique element of height
\(k\), and set
\[
A_i:=\langle a_i\rangle,
\qquad
M_i:=|A_1|\cdots |A_i|,
\qquad
n_i:=|A_i|.
\]
Let
\[
0=G_0\subset G_1\subset\cdots\subset G_s=G
\]
be the associated subgroup chain from Lemma~\ref{lem:kernel-tower}. Then
\[
1=M_0<M_1<\cdots<M_s=v
\]
is a strict divisor chain.

For each \(i\), let \(S_i\) be the set of coordinates that are nonzero on
\(G_i\), and let \(G_i^\red\) be obtained from \(G_i\) by deleting the
coordinates outside \(S_i\). Since \(G_{i-1}\subseteq G_i\), one has
\(S_{i-1}\subseteq S_i\). Hence, after adding zero coordinates indexed by
\(S_i\setminus S_{i-1}\), we may regard \(G_{i-1}^\red\) as a subgroup of
\(G_i^\red\). Deleting zero coordinates does not change heights. Hence each \(G_i^\red\) is
again of type \((M_i,k)\), and by construction it has no zero coordinate.

We define, by induction on \(i\), a finite subset
$\Lc(G_i^\red)\subset G_i^\red$
so that the pair
$(G_i^\red,\Lc(G_i^\red))$
is obtained by the recursive construction of Section~4.

For \(i=1\), the subgroup \(G_1\) is generated by the unique element
\(\cb_1\in G\) of height \(k\), whose order is \(M_1\). By
Lemma~\ref{lem:height-k-rigidity}, after permuting coordinates one has
\[
\cb_1=
\left(\frac1{M_1},\ldots,\frac1{M_1},0,\ldots,0\right),
\]
with exactly \(M_1k\) coordinates equal to \(1/M_1\). Hence
\[
G_1^\red=
\left\langle
\left(\frac1{M_1},\ldots,\frac1{M_1}\right)
\right\rangle.
\]
Thus \(G_1^\red\) is an initial group in the recursive construction, and we
define
$\Lc(G_1^\red):=\{\cb_1^\red\}$,
where
\[
\cb_1^\red=
\left(\frac1{M_1},\ldots,\frac1{M_1}\right).
\]

Now assume that \(i\ge2\), and that the pair
$(G_{i-1}^\red,\Lc(G_{i-1}^\red))$
has already been defined and obtained by the recursive construction. By
Lemma~\ref{lem:kernel-tower},
\[
G_i=G_{i-1}+\langle \cb_i\rangle
\qquad\text{and}\qquad
n_i\cb_i\in G_{i-1}.
\]
Set
$\hb_i:=n_i\cb_i\in G_{i-1}$,
let \(\hb_i^\red\in G_{i-1}^\red\) be the image of \(\hb_i\) after deleting the
zero coordinates of \(G_{i-1}\), and let \(\cb_i^\red\in G_i^\red\) be the
image of \(\cb_i\) after deleting the zero coordinates of \(G_i\).

By Lemma~\ref{lem:Gi-type-and-block}, \(G_{i-1}\) is of type \((M_{i-1},k)\),
\(G_i\) is of type \((M_i,k)\), and for each \(r=0,1,\ldots,n_i-1\), the heights
in the coset \(r\cb_i+G_{i-1}\) are
\[
rM_{i-1}k,\ rM_{i-1}k+k,\ \ldots,\ rM_{i-1}k+(M_{i-1}-1)k.
\]
Deleting zero coordinates does not change the quotient order \(n_i\), nor these
height sets. The image \(\cb_i^\red\) of \(\cb_i\) in \(G_i^\red\) satisfies
\[
G_i^\red=G_{i-1}^\red+\langle \cb_i^\red\rangle
\qquad\text{and}\qquad
G_i^\red/G_{i-1}^\red\cong C_{n_i}.
\]
Moreover, since deleting zero coordinates does not change heights,
\(\cb_i^\red\) is the minimum-height element of
\(\cb_i^\red+G_{i-1}^\red\). Thus all hypotheses of Theorem~\ref{thm:one-step-extension} are satisfied for
the extension \(G_{i-1}^\red\subset G_i^\red\).
Therefore \(\hb_i^\red\) satisfies
\[
(h_i)_r\in\left\{0,\frac1{d_r(G_{i-1}^\red)}\right\}
\]
for every coordinate \(r\). By Theorem~\ref{thm:subset-sums}, there is a unique
subset
$J_i\subseteq \Lc(G_{i-1}^\red)$
such that
$\hb_i^\red=\sum_{\ub\in J_i}\ub$.
We then define
\[
\Lc(G_i^\red)
:=
\bigl(\Lc(G_{i-1}^\red)\setminus J_i\bigr)\cup\{\cb_i^\red\}.
\]
By construction, the pair
$(G_i^\red,\Lc(G_i^\red))$
is obtained from
$(G_{i-1}^\red,\Lc(G_{i-1}^\red))$
by one recursive step.

Thus, for each \(i\ge1\), we obtain a uniquely determined subset
$J_i\subseteq \Lc(G_{i-1}^\red)$,
and hence the strict divisor chain
\[
1=M_0<M_1<\cdots<M_s=v
\]
together with the subsets \(J_i\) gives the desired combinatorial data. Under the recursive identification of \(\Lc(G_{i-1}^\red)\) with the formal set
\(\Lc_{i-1}\), these subsets \(J_i\) determine the required formal
combinatorial data.

It remains to show that the two constructions are inverse. This follows by
induction on \(i\). In the forward direction, the \(i\)-th step constructs
\(G_i\) from \(G_{i-1}\) using the subset \(J_i\), and the resulting element
$\hb_i=n_i\cb_i$
is equal to
$\sum_{\ub\in J_i}\ub$.
Applying the converse procedure to the resulting subgroup recovers the same
strict divisor chain and the same integers \(n_i\). At each step, both
constructions are governed by the same element \(\hb_i=n_i\cb_i\), and the
subset \(J_i\) is uniquely determined from this element by
Theorem~\ref{thm:subset-sums}. Hence the same subsets \(J_i\) are recovered.

Conversely, starting from \(G\), extracting the chain \(M_i\) and the subsets
\(J_i\), and then applying the forward construction reproduces, step by step, a
coordinate-permutation equivalent chain of subgroups, because at each stage both constructions use the same admissible element
\[
\hb_i=\sum_{\ub\in J_i}\ub=n_i\cb_i,
\]
and hence, by Lemma~\ref{lem:construction-from-h}, produce one-step extensions that are equivalent up to permutation of coordinates. Inducting on \(i\), we recover a coordinate-permutation equivalent copy of \(G\).
\end{proof}
We now turn to the counting problem. By
Theorem~\ref{thm:tower-classification}, it remains to count, for each fixed
strict divisor chain, the number of possible choices of the subsets
\(J_i\subseteq \Lc_{i-1}\).
\begin{Proposition}\label{prop:fixed-chain-count}
Fix a strict divisor chain
\[
1=M_0<M_1<\cdots<M_s=v.
\]
Then the number of possible sequences
\[
(J_1,\ldots,J_s),
\qquad
J_i\subseteq \Lc_{i-1}\ \ (i=1,\ldots,s),
\]
is \(s!\).
\end{Proposition}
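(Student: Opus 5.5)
The count depends only on $s$, not on the values $M_i$, because the recursion for $\Lc_i$ involves only the sets $J_i$. I will therefore count sequences $(J_1,\ldots,J_s)$ directly, by tracking the sizes $|\Lc_i|$.

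First, $\Lc_0=\emptyset$ forces $J_1=\emptyset$, so there is exactly one choice at step $1$ and $|\Lc_1|=1$. In general, since $J_i\subseteq \Lc_{i-1}$ and $\ell_i$ is a new symbol,
\[
|\Lc_i|=|\Lc_{i-1}|-|J_i|+1 .
\]
The sizes therefore vary along the sequence, and the number of choices at step $i$ is $2^{|\Lc_{i-1}|}$. A naive product over $i$ does not apply. Instead I will build a bijection with a set of size $s!$, namely the set of permutations of $\{1,\ldots,s\}$, or equivalently sequences $(a_1,\ldots,a_s)$ with $1\le a_i\le i$.

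The structural picture is a forest. Each $\ell_i$ is a node whose children are the elements of $J_i$. Each symbol is removed at most once, namely when it is placed into some $J_i$ and becomes a child of $\ell_i$. Thus the data $(J_1,\ldots,J_s)$ are the same as a forest on the labels $\{1,\ldots,s\}$ in which every child has a smaller label than its parent. That is, each node $j$ either stays a root or picks a parent $p(j)>j$, and $\Lc_i$ is the set of roots of the forest restricted to $\{1,\ldots,i\}$. Conversely, any map $p$ defined on a subset of $\{1,\ldots,s\}$ with $p(j)>j$ determines a valid sequence via $J_i=p^{-1}(i)$. Validity holds because when step $i$ occurs, every $j$ with $p(j)=i$ has $j<i$ and has not yet been removed, so it lies in $\Lc_{i-1}$. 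The number of such data is
\[
\prod_{j=1}^{s}\bigl(1+(s-j)\bigr)=\prod_{j=1}^{s}(s-j+1)=s!,
\]
since node $j$ has $s-j$ possible parents or may remain a root.

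The main point needing care is the equivalence between sequences of subsets and decreasing-parent forests. I must check two things. First, each symbol lies in at most one $J_i$; this holds because it leaves $\Lc$ once chosen and never returns. Second, the condition $J_i\subseteq\Lc_{i-1}$ translates exactly into the conditions that the children of $i$ have smaller labels and were still roots just before step $i$. Both directions are immediate from the recursion, so the count follows.
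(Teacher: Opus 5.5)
Your argument is correct, but it takes a different route from the paper.

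The paper tracks only the size of $\Lc_t$. It sets $P_t(x)=\sum_r A(t,r)x^r$ and observes that choosing $J\subseteq\Lc_t$ with $|J|=j$ contributes $\binom{r}{j}x^{r-j+1}$. This gives the recursion $P_{t+1}(x)=xP_t(1+x)$, hence $P_t(x)=x(x+1)\cdots(x+t-1)$ and $P_s(1)=s!$.

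You instead record the whole history as a partial map $p$ with $p(j)>j$, namely the step at which $\ell_j$ is absorbed. This decouples the choices and turns the count into a product of independent factors $s-j+1$. Your bijection is sound. The one fact it needs is $\Lc_{i-1}=\{\ell_j : j<i,\ p(j)\text{ undefined or } p(j)\ge i\}$, which follows by a one-line induction on $i$; you essentially state it in the validity check, and it is worth writing out.

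What your approach buys is a transparent explanation of where $s!$ comes from, with no generating-function manipulation. After adjoining a common root $s+1$, your forests are exactly recursive trees on $s+1$ vertices. What the paper's computation buys is the refined distribution of $|\Lc_s|$, whose coefficients are the unsigned Stirling numbers of the first kind. Your bijection recovers this refinement at no extra cost. Since $|\Lc_s|$ is the number of roots, marking each root by $x$ gives $\prod_{j=1}^{s}(x+s-j)=x(x+1)\cdots(x+s-1)=P_s(x)$.
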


\begin{proof}
Let \(A(t,r)\) be the number of possible choices after \(t\) steps such that \(|\Lc_t|=r\), and define
\[
P_t(x):=\sum_{r\ge0}A(t,r)x^r.
\]
Initially, \(P_0(x)=1\). Suppose that \(|\Lc_t|=r\). To perform the next step, we choose a subset \(J\subseteq \Lc_t\). If \(|J|=j\), then the chosen \(j\) symbols are removed and one new symbol is added, so the size changes from \(r\) to \(r-j+1\). Thus, for fixed \(r\), the contribution to the next generating polynomial is
\[
\sum_{j=0}^{r}\binom rj x^{r-j+1}
=
x(1+x)^r.
\]
Therefore
\[
P_{t+1}(x)=xP_t(1+x).
\]
By induction, for \(t\ge1\),
\[
P_t(x)=x(x+1)\cdots(x+t-1).
\]
Hence the total number of choices after \(s\) steps is
\[
P_s(1)=1\cdot2\cdots s=s!.
\]
\end{proof}

For \(s\ge1\), let
\[
c_s(D_v)
=
\#\{1=M_0<M_1<\cdots<M_s=v\}
\]
be the number of strict divisor chains of length \(s\) in the divisor lattice \(D_v\).
For a fixed strict divisor chain of length \(s\),
Proposition~\ref{prop:fixed-chain-count} shows that the associated recursive
choices contribute exactly \(s!\) classes. Summing over all strict divisor
chains in \(D_v\), we obtain the following corollary.

\begin{Corollary}\label{cor:finite-group-count}
The number of coordinate-permutation equivalence classes of finite subgroups of
type \((v,k)\) with no zero coordinate is
\[
\sum_{s\ge1}c_s(D_v)s!.
\]
In particular, it depends only on the divisor lattice \(D_v\).
\end{Corollary}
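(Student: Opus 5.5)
The corollary follows by combining Theorem~\ref{thm:tower-classification} with Proposition~\ref{prop:fixed-chain-count}. Theorem~\ref{thm:tower-classification} gives a bijection between coordinate-permutation equivalence classes of finite subgroups of type \((v,k)\) with no zero coordinate and pairs consisting of a strict divisor chain \(1=M_0<M_1<\cdots<M_s=v\) together with a sequence \((J_1,\ldots,J_s)\) of subsets satisfying \(J_i\subseteq\Lc_{i-1}\). So the plan is to count the right-hand set of data and transport the count through this bijection.

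First, I partition the set of admissible data according to the underlying strict divisor chain. This gives a disjoint union over all strict divisor chains from \(1\) to \(v\) in \(D_v\). I then group these chains by their length \(s\). Since \(v\ge2\), every chain has length \(s\ge1\); and since each step strictly increases the divisor, \(s\) is at most the number of prime factors of \(v\) counted with multiplicity. Hence the sum is finite.

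Next, for a fixed chain of length \(s\), the sets \(\Lc_i\) are formal and their recursion depends only on \(s\) and the \(J_i\), not on the particular values \(M_i\). So Proposition~\ref{prop:fixed-chain-count} applies verbatim and gives exactly \(s!\) admissible sequences \((J_1,\ldots,J_s)\). Summing over chains, the total is \(\sum_{s\ge1}c_s(D_v)\,s!\).

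Finally, the numbers \(c_s(D_v)\) are defined purely in terms of the poset \(D_v\): they count chains from its minimum to its maximum with \(s\) strict steps. Hence the total depends only on the isomorphism class of \(D_v\). There is no genuine obstacle here. The only point to state carefully is that the bijection of Theorem~\ref{thm:tower-classification} is with the disjoint union over chains, so that classes attached to distinct chains are distinct; this holds because the chain is recovered canonically from \(G\) through the canonical quotient tower.
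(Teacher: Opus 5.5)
Your proposal is correct and follows the paper's own argument. The paper likewise counts through the bijection of Theorem~\ref{thm:tower-classification}, uses Proposition~\ref{prop:fixed-chain-count} to get \(s!\) choices of \((J_1,\ldots,J_s)\) for each chain of length \(s\), and sums over all strict divisor chains in \(D_v\). Your extra remarks (the sum is finite, and distinct chains give distinct classes) are sound but already contained in that bijection.
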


Combining Proposition~\ref{prop:type-vk-simplex} with
Theorem~\ref{thm:tower-classification} and Corollary~\ref{cor:finite-group-count}, we
immediately obtain Theorems~\ref{thm:intro-classification} and
\ref{thm:intro-main}.

\section{Examples and counting formulas}

In this section, we illustrate Theorem~\ref{thm:tower-classification} through
examples. We first recover the known cases \(v=p\), \(v=p^2\), and \(v=pq\),
then work out explicitly the next case \(v=p^3\), and finally derive closed
formulas for the number of classes in the prime-power and squarefree cases.
Here and in the examples below, for a number \(\alpha\) and a nonnegative
integer \(m\), we write \(\alpha^{[m]}\) for a block of \(m\) coordinates all
equal to \(\alpha\).

\begin{Example}[The case \(v=p\)]
Let \(p\) be a prime number. There is only one strict divisor chain, namely
\(1<p\). Hence the construction is uniquely determined, and we obtain
\[
G=
\left\langle
\left(\left(1/p\right)^{[pk]}\right)
\right\rangle.
\]
Thus there is exactly one coordinate-permutation equivalence class.
\end{Example}

\begin{Example}[The case \(v=p^2\)]
Let \(p\) be a prime number. The strict divisor chains from \(1\) to \(p^2\)
are \(1<p^2\) and \(1<p<p^2\). Hence the total number of classes is
\(1!+2!=3\).

For the chain \(1<p^2\), the group is cyclic and generated by
\[
\cb_1=\left(\left(1/{p^2}\right)^{[p^2k]}\right).
\]

For the chain \(1<p<p^2\), the first generator is
\(\cb_1=\left(\left(1/p\right)^{[pk]}\right)\). At the second step,
\(n_2=p\), and there are two choices. If \(J_2=\emptyset\), then
\[
\cb_2=
\left(
0^{[pk]},
\left(1/p\right)^{[p^2k]}
\right).
\]
If \(J_2=\{\ell_1\}\), then
\[
\cb_2=
\left(
\left(1/{p^2}\right)^{[pk]},
\left(1/p\right)^{[(p^2-1)k]}
\right).
\]
Thus there are exactly three coordinate-permutation equivalence classes.
\end{Example}

\begin{Example}[The case \(v=pq\)]
Let \(p\) and \(q\) be distinct prime numbers. The strict divisor chains from
\(1\) to \(pq\) are \(1<pq\), \(1<p<pq\), and \(1<q<pq\). Hence the total
number of classes is \(1!+2!+2!=5\).

For the chain \(1<pq\), the group is cyclic and generated by
\[
\cb_1=\left(\left(1/{pq}\right)^{[pqk]}\right).
\]

For the chain \(1<p<pq\), one starts from
\(\cb_1=\left(\left(1/p\right)^{[pk]}\right)\), and at the second step
\(n_2=q\). The two possibilities are
\[
\left(
0^{[pk]},
\left(1/q\right)^{[pqk]}
\right)
\qquad\text{and}\qquad
\left(
\left(1/{pq}\right)^{[pk]},
\left(1/q\right)^{[(pq-1)k]}
\right).
\]

For the chain \(1<q<pq\), the description is symmetric: one starts from
\(\cb_1=\left(\left(1/q\right)^{[qk]}\right)\), and at the second step
\(n_2=p\). The two possibilities are
\[
\left(
0^{[qk]},
\left(1/p\right)^{[pqk]}
\right)
\qquad\text{and}\qquad
\left(
\left(1/{pq}\right)^{[qk]},
\left(1/p\right)^{[(pq-1)k]}
\right).
\]
Thus there are exactly five coordinate-permutation equivalence classes.
\end{Example}

\begin{Example}[The case \(v=p^3\)]\label{ex:p3}
Let \(p\) be a prime number. The strict divisor chains from \(1\) to \(p^3\)
are \(1<p^3\), \(1<p<p^3\), \(1<p^2<p^3\), and \(1<p<p^2<p^3\). Hence the
total number of classes is \(1!+2!+2!+3!=11\).

We describe in detail the chain \(1<p<p^2<p^3\), which already illustrates the
whole recursive procedure. The first generator is
\(\cb_1=\left(\left(1/p\right)^{[pk]}\right)\).

If \(J_2=\emptyset\), then
\[
\cb_2=
\left(
0^{[pk]},
\left(1/p\right)^{[p^2k]}
\right),
\]
and \(\Lc_2=\{\ell_1,\ell_2\}\). At the third step, the four choices of \(J_3\) yield the following possibilities
for the third generator \(\cb_3\):
\[
\begin{array}{ll}
\emptyset:&
\displaystyle
\left(
0^{[pk]},
0^{[p^2k]},
\left(1/p\right)^{[p^3k]}
\right),\\[1.5mm]
\{\ell_1\}:&
\displaystyle
\left(
\left(1/{p^2}\right)^{[pk]},
0^{[p^2k]},
\left(1/p\right)^{[(p^3-1)k]}
\right),\\[1.5mm]
\{\ell_2\}:&
\displaystyle
\left(
0^{[pk]},
\left(1/{p^2}\right)^{[p^2k]},
\left(1/p\right)^{[(p^3-p)k]}
\right),\\[1.5mm]
\{\ell_1,\ell_2\}:&
\displaystyle
\left(
\left(1/{p^2}\right)^{[pk]},
\left(1/{p^2}\right)^{[p^2k]},
\left(1/p\right)^{[(p^3-p-1)k]}
\right).
\end{array}
\]

If \(J_2=\{\ell_1\}\), then
\[
\cb_2=
\left(
\left(1/{p^2}\right)^{[pk]},
\left(1/p\right)^{[(p^2-1)k]}
\right),
\]
and \(\Lc_2=\{\ell_2\}\). At the third step, the two choices \(J_3=\emptyset\) and \(J_3=\{\ell_2\}\)
yield the following possibilities for \(\cb_3\):
\[
\left(
0^{[pk]},
0^{[(p^2-1)k]},
\left(1/p\right)^{[p^3k]}
\right)
\qquad\text{and}\qquad
\left(
\left(1/{p^3}\right)^{[pk]},
\left(1/{p^2}\right)^{[(p^2-1)k]},
\left(1/p\right)^{[(p^3-p)k]}
\right).
\]
Thus the chain \(1<p<p^2<p^3\) gives \(6\) classes.

The remaining three chains are handled similarly. For \(1<p^3\), one obtains
the cyclic group generated by
\(\left(\left(1/{p^3}\right)^{[p^3k]}\right)\). For \(1<p<p^3\), one
starts from \(\left(\left(1/p\right)^{[pk]}\right)\), and the second
generator is either
\[
\left(
0^{[pk]},
\left(1/{p^2}\right)^{[p^3k]}
\right)
\qquad\text{or}\qquad
\left(
\left(1/{p^3}\right)^{[pk]},
\left(1/{p^2}\right)^{[(p^3-1)k]}
\right).
\]
For \(1<p^2<p^3\), one starts from
\(\left(\left(1/{p^2}\right)^{[p^2k]}\right)\), and the second generator is
either
\[
\left(
0^{[p^2k]},
\left(1/p\right)^{[p^3k]}
\right)
\qquad\text{or}\qquad
\left(
\left(1/{p^3}\right)^{[p^2k]},
\left(1/p\right)^{[(p^3-1)k]}
\right).
\]
Hence these three chains contribute \(1+2+2=5\) classes. Therefore the total
number of classes for \(v=p^3\) is \(5+6=11\).
\end{Example}
We now pass from examples to general counting formulas. In the prime-power
case, the divisor lattice is a chain, so Corollary~\ref{cor:finite-group-count}
immediately yields a closed formula.
\begin{Proposition}\label{prop:prime-power-count}
Let \(p\) be a prime number and let \(\ell\ge1\). Then
\[
N(p^\ell,k)=\sum_{s=1}^{\ell}\binom{\ell-1}{s-1}s!.
\]
\end{Proposition}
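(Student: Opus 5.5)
The plan is to apply Corollary~\ref{cor:finite-group-count}, together with Proposition~\ref{prop:type-vk-simplex}, which gives
\[
N(p^\ell,k)=\sum_{s\ge1}c_s(D_{p^\ell})\,s!.
\]
Everything then reduces to computing \(c_s(D_{p^\ell})\), the number of strict divisor chains \(1=M_0<M_1<\cdots<M_s=p^\ell\) in \(D_{p^\ell}\).

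First, I will use that the divisors of \(p^\ell\) are exactly \(p^0,p^1,\ldots,p^\ell\), and that divisibility among them is the usual order on exponents. Hence \(D_{p^\ell}\) is a chain of \(\ell+1\) elements, and a strict divisor chain from \(1\) to \(p^\ell\) of length \(s\) corresponds to a strictly increasing sequence of exponents
\[
0=e_0<e_1<\cdots<e_s=\ell .
\]
Such a sequence is determined by the set \(\{e_1,\ldots,e_{s-1}\}\), which is an arbitrary \((s-1)\)-element subset of \(\{1,\ldots,\ell-1\}\). Therefore \(c_s(D_{p^\ell})=\binom{\ell-1}{s-1}\) for \(1\le s\le\ell\), and \(c_s=0\) for \(s>\ell\), since each step must raise the exponent by at least one.

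Substituting this into the formula above gives the claimed identity. There is no real obstacle here: the only point needing care is the bijection between chains and subsets of intermediate exponents. As a sanity check, \(\ell=2\) gives \(1+2=3\) and \(\ell=3\) gives \(1+2\cdot2+6=11\), which matches the examples.
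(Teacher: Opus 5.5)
Your proposal is correct and matches the paper's proof: since $D_{p^\ell}$ is a chain, a strict divisor chain of length $s$ is determined by choosing $s-1$ intermediate divisors among $p,\ldots,p^{\ell-1}$, which gives $c_s(D_{p^\ell})=\binom{\ell-1}{s-1}$, and Corollary~\ref{cor:finite-group-count} then yields the formula. Your sanity checks for $\ell=2$ and $\ell=3$ also agree with the values $3$ and $11$ found in the paper's examples.
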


\begin{proof}
The divisor lattice of \(p^\ell\) is the chain
\(1<p<p^2<\cdots<p^\ell\). A strict divisor chain
\(1=M_0<M_1<\cdots<M_s=p^\ell\) is obtained by choosing \(s-1\) intermediate
divisors among \(p,p^2,\ldots,p^{\ell-1}\). Thus
\(c_s(D_{p^\ell})=\binom{\ell-1}{s-1}\), and the result follows from
Corollary~\ref{cor:finite-group-count}.
\end{proof}

\begin{Proposition}\label{prop:squarefree-count}
Let \(p_1,\ldots,p_\ell\) be distinct prime numbers. Then
\[
N(p_1p_2\cdots p_\ell,k)=\sum_{s=1}^{\ell}(s!)^2 S(\ell,s),
\]
where \(S(\ell,s)\) denotes the Stirling number of the second kind.
\end{Proposition}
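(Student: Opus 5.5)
By Corollary~\ref{cor:finite-group-count} and Proposition~\ref{prop:type-vk-simplex}, we have
\[
N(p_1\cdots p_\ell,k)=\sum_{s\ge1}c_s(D_v)\,s!
\qquad\text{with } v=p_1\cdots p_\ell .
\]
So the proof reduces to computing the number of strict divisor chains in the divisor lattice of a squarefree number. Our goal is to show that
\[
c_s(D_v)=s!\,S(\ell,s).
\]
Substituting this into the formula above gives \(\sum_{s=1}^{\ell}(s!)^2S(\ell,s)\). For \(s>\ell\) there are no strict chains, so the sum truncates at \(s=\ell\).

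First, we identify \(D_v\) with the Boolean lattice of subsets of \(\{1,\dots,\ell\}\). This uses the map \(M\mapsto\{j: p_j\mid M\}\), which is an order isomorphism because \(v\) is squarefree. A strict chain
\[
1=M_0<M_1<\cdots<M_s=v
\]
then corresponds to a strictly increasing chain of subsets
\[
\emptyset=T_0\subsetneq T_1\subsetneq\cdots\subsetneq T_s=\{1,\dots,\ell\}.
\]

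Next, we biject such chains with ordered set partitions of \(\{1,\dots,\ell\}\) into \(s\) nonempty blocks. The map sends a chain to the sequence of differences
\[
(T_1\setminus T_0,\ T_2\setminus T_1,\ \dots,\ T_s\setminus T_{s-1}).
\]
Each block is nonempty by strictness, and the blocks are pairwise disjoint with union \(\{1,\dots,\ell\}\). Conversely, an ordered partition \((B_1,\dots,B_s)\) gives back a chain by taking the cumulative unions \(T_i=B_1\cup\cdots\cup B_i\). These two maps are clearly inverse to each other.

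The number of ordered partitions of an \(\ell\)-element set into \(s\) nonempty blocks is \(s!\,S(\ell,s)\): there are \(S(\ell,s)\) unordered partitions, and \(s!\) orderings of the blocks of each. This gives \(c_s(D_v)=s!\,S(\ell,s)\), as required.

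There is no real obstacle here. The only point needing care is the bookkeeping: strictness of the divisor chain must be used to guarantee that every block is nonempty. As a sanity check, \(\ell=2\) gives \(1+4=5\), which agrees with the computation for \(v=pq\) in the example above.
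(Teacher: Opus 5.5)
Your proposal is correct and follows the paper's proof: both reduce via Corollary~\ref{cor:finite-group-count} to showing \(c_s(D_v)=s!\,S(\ell,s)\). Both then prove this by identifying strict chains in the Boolean lattice \(D_v\) with ordered partitions of \(\{1,\dots,\ell\}\) into \(s\) nonempty blocks. You simply spell out the bijection (successive differences and cumulative unions) a little more explicitly than the paper does.
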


\begin{proof}
The divisor lattice of \(p_1p_2\cdots p_\ell\) is the Boolean lattice on
\(\ell\) elements. A strict divisor chain
\(1=M_0<M_1<\cdots<M_s=p_1p_2\cdots p_\ell\) is equivalent to an ordered
partition of \(\{1,2,\ldots,\ell\}\) into \(s\) nonempty blocks, where the
\(j\)-th block records which prime factors are introduced when passing from
\(M_{j-1}\) to \(M_j\). The number of such ordered partitions is
\(s!S(\ell,s)\). Thus
\(c_s(D_{p_1p_2\cdots p_\ell})=s!S(\ell,s)\), and the result follows from
Corollary~\ref{cor:finite-group-count}.
\end{proof}

\bibliographystyle{plain}
\bibliography{bibliography}

\end{document}